\numberwithin{equation}{section}
\newtheorem{theorem}{Theorem}[section]
\theoremstyle{plain}
\newtheorem{proposition}[theorem]{Proposition}
\newtheorem{lemma}[theorem]{Lemma}
\newtheorem{corollary}[theorem]{Corollary}
\newtheorem{remark}[theorem]{Remark}
\def\be{\begin{equation}}
	\def\ee{\end{equation}}
\begin{document}
	\begin{sloppypar}

\title[]{New Volume Comparison Results and Volume Growth Rigidity of Gradient Ricci Almost Solitons }
\author{Li Wen-Qi}
\address{School of Mathematical Sciences , East China Normal University, Shanghai 200241, China}
\email{ 51255500054@stu.ecnu.edu.cn}

\begin{abstract}
	In this paper, we establish a new volume comparison theorem for complete Riemannian manifolds with Bakry--\'{E}mery Ricci curvature bounded below by a function $\rho(x)$. By constructing a twisted product  model space, we derive a new volume growth rigidity result for gradient Ricci almost solitons. A key feature of our approach is the characterization of the equality case through a generalized Riccati equation. Furthermore, we investigate shrinking gradient Ricci almost solitons by deriving potential function estimates, proving that the manifold is isometric to the Gaussian shrinking soliton if the volume growth bound is attained for all $r > 0$.
\end{abstract}

\maketitle
\date{}
\section{Introduction}

Let $(M^n,g)$ be a complete Riemannian manifold. We say that $M^n$ is a \emph{Ricci soliton} if there exists a smooth vector field $X$ on $M^n$ such that the Ricci curvature tensor $\mathrm{Ric}$ satisfies
\begin{equation}\label{sleq}
	\mathrm{Ric} + \frac{1}{2}\mathcal{L}_X g = \rho g,
\end{equation}
for some constant $\rho$, where $\mathcal{L}_X$ denotes the Lie derivative along $X$. In particular, if \(X=\nabla f\) for some \(f\in C^\infty(M^n)\), then \(f\)
is called the potential function and the soliton equation is expressed as
\[
    \operatorname{Ric}+\operatorname{Hess} f=\rho g,
\]
and \(M^n\) is referred to as a gradient Ricci soliton. According to the sign of \(\rho\), Ricci solitons are classified into three types: \emph{shrinking} ($\rho>0$), \emph{steady} ($\rho=0$), and \emph{expanding} ($\rho<0$). A typical example is the Gaussian shrinking Ricci soliton on $\mathbb{R}^n$, characterized by the potential function $f(x) = \frac{|x|^2}{4}$ and $\rho = \frac{1}{2}$. The tensor $\mathrm{Ric}_{f} := \mathrm{Ric} + \mathrm{Hess} f$ is known as the Bakry--\'{E}mery Ricci tensor, introduced by Bakry and \'{E}mery in \cite{BakryEmery1985}. This tensor plays a fundamental role in the field of geometric analysis.

Ricci solitons were first introduced by Hamilton \cite{Ha3} as self-similar solutions to the Ricci flow, and they play a fundamental role in Perelman's proof of the Poincar\'{e} conjecture \cite{Pel1,Pel2}. In recent years, gradient Ricci solitons have been extensively studied (see, e.g., \cite{CCZ,N,NW,Pw1,Pw2,Z} and references therein).

In \cite{PRR}, Pigola, Rigoli, Rimoldi, and Setti extended the notion of gradient Ricci solitons by allowing $\rho$ to be a smooth function, introducing the concept of \emph{gradient Ricci almost solitons}. The corresponding equation becomes
\[
\mathrm{Ric} + \mathrm{Hess} f = \rho(x) g.
\]
Similarly, it is called shrinking, steady, or expanding according as \(\rho(x)\) is
positive, zero, or negative on \(M^n\). Unlike the Einstein equation \(\operatorname{Ric}=\lambda g\), where Schur's
lemma forces \(\lambda\) to be constant in dimension \(n\ge3\), the presence of
the Hessian term in
\[
    \operatorname{Ric}+\nabla^2 f=\rho(x)g
\]
makes such a rigidity argument unavailable in general. This
represents a nontrivial generalization, as noted by Pigola--Rigoli--Rimoldi--Setti \cite{PRR}. A notable example of a nontrivial Ricci almost soliton (where $\rho$ is nonconstant) is the gradient Einstein soliton defined in \cite{CM}, which arises as a self-similar solution of the Ricci--Bourguignon flow \cite{CCD}. Therefore, the study of nontrivial Ricci almost solitons may contribute to understanding singularity formation in other geometric flows like the Ricci--Bourguignon flow.

Recently, significant research has been devoted to the study of gradient Ricci almost solitons (see, e.g., \cite{AE, ABR, FAJR, PRR}), with a particular emphasis on their scalar curvature rigidity. The primary objective of the present paper is to establish a new volume comparison theorem for complete Riemannian manifolds satisfying the following condition on the Bakry--\'{E}mery Ricci curvature:
\begin{equation}\label{RCG}
	\mathrm{Ric} + \nabla^2 f \ge \rho(x) g.
\end{equation}

Volume comparison theorems under the Bakry--Emery Ricci lower bound are well
understood when the lower bound is constant. For the weighted measure,
fundamental results were established by Wei--Wylie \cite{WW} and Bakry--Qian
\cite{BakryQian}. For the standard Riemannian measure, analogous estimates were
obtained by Zhang--Zhu \cite{ZZ} and Munteanu--Wang \cite{MW}. A natural
question is what happens when the constant lower bound is replaced by a
variable function \(\rho(x)\), a situation naturally arising for gradient Ricci
almost solitons.

The Bakry--Emery Ricci lower bound naturally yields a comparison inequality
for the weighted Laplacian of the distance function,
\[
    \Delta_f r
    =
    \Delta r-\langle\nabla f,\nabla r\rangle,
\]
rather than directly for the ordinary Laplacian \(\Delta r\). Therefore, in
order to obtain a comparison theorem for the standard Riemannian volume, one
must additionally control the radial derivative
\(\langle\nabla f,\nabla r\rangle\). When the lower bound is constant, Wei and
Wylie \cite{WW} treated this term under suitable assumptions and obtained local
volume comparison and rigidity results. When the lower bound is replaced by a
function \(\rho(x)\), however, both \(\rho\) and the radial behavior
of \(f\) vary along different geodesics, which makes the usual model-space
comparison more difficult.

In a recent paper, Azami and Hajiaghasi \cite{AzamiHajiaghasi2022}, following
an approach similar to that of Zhang and Zhu \cite{ZZ}, controlled the gradient
term by assuming
\[
    |\nabla f|(x)\le \frac{K}{d(x,o)^\alpha},
\]
where \(o\in M^n\) is fixed. Although this assumption makes the Bochner
argument applicable, it is not compatible with the standard noncompact
shrinking gradient Ricci solitons. Indeed, Cao and Zhou \cite{CZ} proved
quadratic growth estimates for the potential function of a complete noncompact
gradient shrinking Ricci soliton, and for the Gaussian shrinking soliton one
has \(f(x)=|x|^2/4\) and \(|\nabla f|(x)=|x|/2\). Thus the above decay condition
excludes the basic Gaussian shrinking soliton. Moreover, the resulting volume
comparison contains exponential correction factors arising from the estimate of
the gradient term, and such estimates are not well suited to identifying the
equality case or proving a sharp rigidity theorem.

Pigola, Rigoli, Rimoldi, and Setti \cite{PRR} also obtained volume comparison
results for Ricci almost solitons from the viewpoint of weighted manifolds.
Their estimates concern the weighted volume associated with the measure
\(e^{-f}d\operatorname{vol}\), under additional radial control on
\(\langle\nabla f,\nabla r\rangle\) or on the potential function \(f\). These
results are useful in the weighted setting, but they do not directly give a
sharp comparison theorem for the standard Riemannian volume or an equality
case for such a comparison. Thus, for the standard Riemannian volume under a
variable lower bound \(\rho(x)\), a different argument is needed.

To overcome these difficulties, we construct a function \(p(r,\theta)\)
satisfying a Riccati-type equation involving both the potential function \(f\)
and the variable lower bound \(\rho(x)\). This gives the model metric
\[
    g_p=dr^2+p(r,\theta)g_{\mathbb S^{n-1}}.
\]
Our method does not require any decay assumption on \(|\nabla f|\) and gives a
volume comparison theorem for the standard Riemannian volume. It also allows
us to describe the equality case and prove local and global rigidity results.
For shrinking gradient Ricci almost solitons, we further obtain quadratic
estimates for the potential function, volume growth of order at most \(R^n\)
under suitable assumptions on \(\rho\), and the conclusion that equality in the
sharp comparison for all radii forces the soliton to be the Gaussian shrinking
Ricci soliton.

Before stating our main results, we establish the necessary notation. For a fixed reference point $o \in M^n$, let $(r, \theta)$ denote the polar coordinates in a geodesic ball $B_o(R)$, where $r(x) = d(x, o)$ is the distance function and $\theta \in \mathbb{S}^{n-1}$. We denote by $\partial_r$ the radial vector field $\nabla r$. For any \(C^2\) function \(u\), its radial derivative is denoted by $\dot{u} = \partial_r u$, and we write $\ddot{u} = \mathrm{Hess} u (\partial_r, \partial_r)$ for the second radial derivative of $u$. Furthermore, let \(\omega(r,\theta)\) denote the mean curvature of the
geodesic sphere \(\partial B_o(r)\), and let \(J(r,\theta)d\theta\) denote the
Riemannian area element of \(\partial B_o(r)\) in geodesic polar coordinates.
Equivalently, the Riemannian volume element is written as
\[
    d\operatorname{vol}_g=J(r,\theta)\,dr\,d\theta.
\]
We denote by \(\operatorname{Vol}(B_o(r))\) the Riemannian volume of the
geodesic ball of radius \(r\), and by \(\operatorname{inj}(o)\) the injectivity
radius at \(o\).

Our first result controls the volume growth of geodesic balls in a complete
manifold satisfying \eqref{RCG} by using a chosen test function \(p\). Here the
test function \(p\) determines the radial part of the model metric used for
comparison.

\begin{theorem}\label{thm1.1}
Let \((M^n,g)\) be a complete Riemannian manifold satisfying \eqref{RCG} for some \(f,\rho\in C^2(M)\). Fix a point \(o\in M\), and let
\(\operatorname{inj}(o)>R_0>0\). Let \(p\in C^2(B_o(R_0)\setminus\{o\})\) be a positive function. In geodesic
polar coordinates centered at \(o\), we write
\[
    p(x)=p(r,\theta).
\] Assume that \(p\) admits a continuous extension to \(o\) with \(p(o)=0\), and
that, as \(r\to0^+\),
\[
    p(r,\theta)=r^2+O(r^4),\qquad
    \dot p(r,\theta)=2r+O(r^3),\qquad
    \ddot p(r,\theta)=2+O(r^2),
\]
uniformly in \(\theta\in\mathbb S^{n-1}\). Define
\[
    \mathcal K_p(r,\theta)
    =
    \frac{n-1}{2}\frac{\ddot p(r,\theta)}{p(r,\theta)}
    -
    \frac{n-1}{4}\frac{\dot p^2(r,\theta)}{p^2(r,\theta)}.
\]

Then, for every \(0<R\le R_0\), the volume of \(B_o(R)\) satisfies
\begin{equation}\label{mvcp}
\begin{aligned}
    \operatorname{Vol}(B_o(R))
    \le
    \int_{\mathbb S^{n-1}}
    \int_0^R
    p^{\frac{n-1}{2}}(s,\theta)
    \exp\left\{
        \int_0^s
        \frac{1}{p(t,\theta)}
        \int_0^t
        p(u,\theta)
        \bigl(
            \ddot f(u,\theta)
            -
            \rho(u,\theta)
            -
            \mathcal K_p(u,\theta)
        \bigr)
        \,du\,dt
    \right\}
    ds\,d\theta .
\end{aligned}
\end{equation}

Moreover, equality holds in \eqref{mvcp} for \(B_o(R)\) if and only if, for all
\(0<r\le R\) and all \(\theta\in\mathbb S^{n-1}\), the chosen function \(p\)
satisfies
\begin{equation}\label{ricatii}
    \ddot f(r,\theta)-\rho(r,\theta)
    =
    \mathcal K_p(r,\theta),
\end{equation}
and
\begin{equation}\label{eqst}
    \omega(r,\theta)
    =
    \frac{(n-1)\dot p(r,\theta)}{2p(r,\theta)}.
\end{equation}

In the equality case,
\[
    J(r,\theta)=p^{\frac{n-1}{2}}(r,\theta)
\]
for all \(0<r\le R\), and hence
\[
    \operatorname{Vol}(B_o(r))
    =
    \int_{\mathbb S^{n-1}}
    \int_0^r
    p^{\frac{n-1}{2}}(s,\theta)\,ds\,d\theta
    \qquad
    \text{for all }0<r\le R.
\]
\end{theorem}
\begin{remark}
	Let $q(x) = \sqrt{p(x)}$. Then equation \eqref{ricatii} becomes
	\[
	\frac{\ddot{f}(r,\theta) - \rho(r,\theta)}{n-1} = \dot{h}(r,\theta) + h^2(r,\theta),
	\]
	where $h = \frac{\dot{q}}{q}$. Hence, \eqref{ricatii} is a Riccati equation.
\end{remark}

\begin{remark}\label{rem:well-defined-exponential}
The asymptotic assumptions on \(p\) ensure that the exponential factor in
\eqref{mvcp} is well defined near the pole. Indeed,
\[
    p(r,\theta)=r^2+O(r^4),\qquad
    \dot p(r,\theta)=2r+O(r^3),\qquad
    \ddot p(r,\theta)=2+O(r^2)
\]
imply
\[
    \frac{\ddot p(r,\theta)}{p(r,\theta)}
    =
    \frac{2}{r^2}+O(1),
    \qquad
    \frac{\dot p^2(r,\theta)}{p^2(r,\theta)}
    =
    \frac{4}{r^2}+O(1).
\]
Hence
\[
    \mathcal K_p(r,\theta)=O(1)
    \qquad
    \text{as }r\to0^+,
\]
uniformly in \(\theta\). Since \(f,\rho\in C^2(M)\), the quantity
\(\ddot f-\rho-\mathcal K_p\) is locally bounded near \(o\). Therefore
\[
    p(u,\theta)\bigl(\ddot f(u,\theta)-\rho(u,\theta)
    -\mathcal K_p(u,\theta)\bigr)=O(u^2),
\]
and consequently
\[
    \frac{1}{p(t,\theta)}
    \int_0^t
    p(u,\theta)
    \bigl(\ddot f(u,\theta)-\rho(u,\theta)-\mathcal K_p(u,\theta)\bigr)
    \,du
    =
    O(t).
\]
Thus the outer integral in the exponent of \eqref{mvcp} is finite near
\(t=0\).
\end{remark}
\begin{remark}
	The condition \(p(s,\theta)=s^2+O(s^4)\) is imposed to ensure the correct
behavior near the origin. For any smooth Riemannian metric, the mean curvature of small geodesic spheres
satisfies
\[
    \omega(s,\theta)=\frac{n-1}{s}+O(s),
\]
as \(s\to0^+\). This follows from the standard expansion of the volume density
in geodesic polar coordinates; see, for example, Gray--Vanhecke \cite{GrayVanhecke1979}. 

	If $p$ had an $s^3$ term, say $p = s^2 + as^3 + O(s^4)$, then the model mean curvature would become
	\[
	m_p = \frac{n-1}{2}\frac{\dot{p}}{p} = \frac{n-1}{s} + \frac{n-1}{2}a + O(s).
	\]
	Hence $a$ must be zero, which gives $\omega - m_p = O(s)$ as $s \to 0$. This ensures that the integrand in the exponential term of \eqref{mvcp} stays bounded near the origin, and that $J/p^{(n-1)/2} \to 1$ with zero radial derivative at $o$.
\end{remark}
This result describes the behavior of the volume of geodesic balls. When equality holds, the test function \(p(r,\theta)\) satisfies the Riccati
equation \eqref{ricatii} in every radial direction. In this case, the volume growth of $M^n$ is controlled by the volume growth of a geodesic ball in a Riemannian manifold with metric $g = dr^2 + p(r, \theta)  g_{\mathbb{S}^{n-1}}$.

In the classical Bishop--Gromov volume comparison theorem, the equality case
implies that the geodesic ball is isometric to a ball in the corresponding
model space. More recently, Cheng, Ribeiro, and Zhou \cite{CRZ} proved that
gradient shrinking Ricci solitons with the same volume growth as Euclidean
space satisfy a similar rigidity property. Their argument uses the scalar
curvature equation of a gradient shrinking Ricci soliton and the strong maximum
principle. It is therefore natural to ask whether such a rigidity conclusion
can be obtained directly from the equality case of a volume comparison theorem.

The equality characterization in Theorem~\ref{thm1.1} leads to the following
local rigidity result. Thus, the next theorem gives an analogue of the classical
Bishop--Gromov equality statement under the Bakry--Emery lower bound
\eqref{RCG}.

\begin{theorem}\label{thm3.1}
Let \((M^n,g)\) be a complete Riemannian manifold satisfying \eqref{RCG} for
some \(f,\rho\in C^2(M)\). Fix a point \(o\in M\). Suppose that, for some
\(R>0\), there exists a positive function $p\in C^2(B_o(R)\setminus\{o\})$ satisfying equation \eqref{ricatii} on \(B_o(R)\setminus\{o\}\). Assume that
\(p\) admits a continuous extension to \(o\) with \(p(o)=0\), and that, as
\(r\to0^+\),
\[
    p(r,\theta)=r^2+O(r^4),\qquad
    \dot p(r,\theta)=2r+O(r^3),\qquad
    \ddot p(r,\theta)=2+O(r^2),
\]
uniformly in \(\theta\in\mathbb S^{n-1}\). Then for every
\(0<r\le \min\{R,\operatorname{inj}(o)\}\), we have
\begin{equation}\label{vbcp}
    \operatorname{Vol}(B_o(r))
    \le
    \int_{\mathbb S^{n-1}}\int_0^r
    p^{(n-1)/2}(s,\theta)\,ds\,d\theta .
\end{equation}
Moreover, equality holds in the above inequality for some
\(0<r_0\le \min\{R,\operatorname{inj}(o)\}\) if and only if, in geodesic polar
coordinates on \(B_o(r_0)\setminus\{o\}\), the metric is given by
\[
    g=dr^2+p(r,\theta)g_{\mathbb S^{n-1}}.
\]
In particular, \(B_o(r_0)\) is isometric to the corresponding polar model ball.
\end{theorem}

\begin{remark}
In Theorem \ref{thm3.1}, the function \(p\) should be understood first as a
test function which determines the comparison term \(p^{(n-1)/2}\). Thus the
expression
\[
    g_p=dr^2+p(r,\theta)g_{\mathbb S^{n-1}}
\]
should not, in general, be regarded as a smooth model metric at the pole. For
an arbitrary test function \(p\), this expression may fail to extend smoothly
across \(r=0\).

There are standard choices of \(p\) for which no such difficulty occurs. For
example, in the radial case, if
\[
    p(r)=\psi^2(r),
    \qquad
    \psi(r)=r\varphi(r^2),
\]
where \(\varphi\) is smooth near \(0\) and \(\varphi(0)=1\), then
\[
    dr^2+p(r)g_{\mathbb S^{n-1}}
\]
extends smoothly across \(r=0\). This includes the usual space-form examples
\(p(r)=\operatorname{sn}_K^2(r)\).

In the equality case of Theorem \ref{thm3.1}, no additional smoothness
assumption on \(g_p\) is needed. Indeed, equality implies that the original
smooth metric \(g\), written in geodesic polar coordinates on the geodesic
ball, has the form
\[
    g=dr^2+p(r,\theta)g_{\mathbb S^{n-1}}.
\]
Thus the required smooth behavior at the pole follows from the smoothness of
the original metric \(g\).
\end{remark}
\begin{remark}
	In Theorem \ref{thm3.1}, taking $\rho = (n-1)C$ and $f = 0$, and choosing $p(x) = \mathrm{sn}^2_C(r)$, we recover the volume comparison theorem for manifolds with lower Ricci curvature bound. Here, $\mathrm{sn}_C(r)$ is the solution of
	\begin{equation}\label{vineq}
		\begin{cases}
			\ddot{\mathrm{sn}}_C(r) + C \mathrm{sn}_C(r) = 0, \\
			\mathrm{sn}_C(0) = 0, \quad \dot{\mathrm{sn}}_C(0) = 1.
		\end{cases}
	\end{equation}
\end{remark}

The volume comparison theorem provides a powerful tool for establishing local rigidity. To obtain global rigidity, however, one has to control the cut locus.
Assuming equality in Theorem~\ref{thm3.1} holds for a test function
\(p(r,\theta)\) that is strictly positive for all \(r>0\), the corresponding
polar model is globally defined in the radial variable. The global volume
equality then forces the cut locus of \(o\) in \(M\) to be empty. Consequently, the volume comparison theorem extends to the entire manifold and yields the following global rigidity theorem.
\begin{theorem}\label{hhg}
Let \((M^n,g)\) be a complete noncompact Riemannian manifold satisfying
\eqref{RCG} for some smooth functions \(f\) and \(\rho\). Fix \(o\in M\).
Let \(p=p(r,\theta)\) be a positive smooth function on
\((0,\infty)\times\mathbb S^{n-1}\), satisfying the asymptotic condition in
Theorem \ref{thm1.1}, and suppose that \(p\) solves \eqref{ricatii} for all \(r>0\) and all
\(\theta\in\mathbb S^{n-1}\). Assume that the metric
\[
    g_p=dr^2+p(r,\theta)g_{\mathbb S^{n-1}}
\]
extends smoothly across \(r=0\) and is globally defined for all \(r>0\).
If, for every \(R>0\),
\begin{equation}\label{eq:global-volume-equality}
     \operatorname{Vol}(B_o(R))
    =
    \int_{\mathbb S^{n-1}}\int_0^R
    p^{\frac{n-1}{2}}(s,\theta)\,ds\,d\theta,
\end{equation}
then the cut locus of $o$ is empty. Hence
\(\exp_o:T_oM\to M\) gives a global diffeomorphism between \(M^n\) and
\(\mathbb R^n\), and
\[
    \pi_k(M^n)=0
    \qquad
    \text{for all }k\ge1.
\]
Moreover, \((M^n,g)\) is globally isometric to the smooth model manifold
whose metric is written in global polar coordinates as
\[
    g_p=dr^2+p(r,\theta)g_{\mathbb S^{n-1}}.
\]
\end{theorem}
 
We now apply the comparison theorem to volume growth estimates for gradient
shrinking Ricci almost solitons. Since a normalized shrinking almost soliton
with \(\rho\ge \frac12\) satisfies the Bakry--Emery lower bound
\[
    \operatorname{Ric}+\nabla^2 f\ge \frac12 g,
\]
we first recall two related results under this curvature condition.

Munteanu and Wang \cite{MW} proved that if a complete smooth metric measure
space satisfies
\[
    \operatorname{Ric}+\nabla^2 f\ge \frac12 g,
    \qquad
    |\nabla f|^2\le f,
\]
then
\[
    \operatorname{Vol}(B_p(r))\le c(n)e^{f(p)}r^n
\]
for all \(r>0\). Cheng--Ribeiro--Zhou \cite{CRZ} later obtained a sharp volume
growth estimate for complete noncompact normalized gradient shrinking Ricci
solitons. In particular, their estimate for geodesic balls is achieved by the
Gaussian shrinking Ricci soliton, and the equality case characterizes the
Gaussian shrinking Ricci soliton.

The estimate below keeps track of the possible failure of the condition
\(|\nabla f|^2\le f\) through the quantity \(A_f(R)\). Moreover, equality for
all radii characterizes the Gaussian shrinking Ricci soliton.

In the normalized shrinking case considered below, the reference lower bound is
\(\frac12\). For a fixed point \(o\in M\), define
\[
    \Phi(r,\theta)
    :=
    f(o)
    -
    \frac1r
    \int_0^r
    \bigl(f-|\nabla f|^2\bigr)(t,\theta)\,dt .
\]
For every \(R>0\), set
\[
    A_f(R)
    :=
    \sup_{B_o(R)}
    \bigl(|\nabla f|^2-f\bigr).
\]
Since \(B_o(R)\) is compact, \(A_f(R)<\infty\) for every fixed \(R>0\).
\begin{theorem}\label{thm1.3}
Let \((M^n,g,f,\rho)\), \(n\ge2\), be a complete noncompact gradient shrinking
Ricci almost soliton
\[
    \operatorname{Ric}+\nabla^2f=\rho g .
\]
Assume that
\[
    \rho(x)\ge \frac12
    \qquad
    \text{for all }x\in M.
\]
Then, for every \(R>0\), the geodesic ball \(B_o(R)\) satisfies
\begin{equation}\label{eq:sharp-comparison-almost}
    \operatorname{Vol}(B_o(R))
    \le
    \int_{\mathbb S^{n-1}}
    \int_0^R
    e^{\Phi(s,\theta)}s^{n-1}\,ds\,d\theta .
\end{equation}
Consequently,
\begin{equation}\label{eq:polynomial-growth-almost}
    \operatorname{Vol}(B_o(R))
    \le
    e^{f(o)+A_f(R)}\omega_nR^n,
\end{equation}
where \(\omega_n\) denotes the volume of the unit ball in \(\mathbb R^n\).

Moreover, if for some \(R>0\),
\[
    f(o)\le -A_f(R),
\]
then
\begin{equation}\label{eq:euclidean-volume-bound}
    \operatorname{Vol}(B_o(R))\le \omega_nR^n .
\end{equation}

Finally, if equality in \eqref{eq:sharp-comparison-almost} holds for every
\(R>0\), then \((M^n,g,f,\rho)\) is the Gaussian shrinking Ricci soliton; that
is,
\[
    (M^n,g)\cong(\mathbb R^n,g_{\mathrm{Euc}}),
    \qquad
    \rho\equiv\frac12,
    \qquad
    f(x)=f(o)+\frac{d^2(o,x)}4.
\]
\end{theorem}
\begin{remark}
When \(\rho\equiv\frac12\) and the potential is normalized by
\[
    R+|\nabla f|^2=f,
\]
we have
\[
    f-|\nabla f|^2=R.
\]
Therefore Theorem \ref{thm1.3} recovers the volume comparison theorem of
Cheng--Ribeiro--Zhou \cite{CRZ}:
\[
\operatorname{Vol}(B_o(R))
\le
\int_{\mathbb S^{n-1}}
\int_0^R
\exp\left\{
    f(o)-\frac1s\int_0^sR(t,\theta)\,dt
\right\}
s^{n-1}\,ds\,d\theta .
\]
The equality case is obtained here from the equality case of Theorem
\ref{thm1.1} and the global rigidity Theorem \ref{hhg}, rather than from
scalar curvature rigidity.
\end{remark}

\begin{remark}
The estimate in Theorem \ref{thm1.3} does not require the exact almost soliton
equation
\[
    \operatorname{Ric}+\nabla^2 f=\rho g
\]
itself. The same estimate remains valid under the weaker Bakry--Emery lower
bound
\[
    \operatorname{Ric}+\nabla^2 f\ge \frac12 g.
\]
Under this weaker assumption, the same equality rigidity also holds: if equality
in \eqref{eq:sharp-comparison-almost} holds for all \(R>0\), then \((M^n,g)\)
is globally isometric to \(\mathbb R^n\), and
\[
    f(x)=f(o)+\frac{d^2(o,x)}4.
\]

In the special case of a normalized gradient shrinking Ricci soliton, where
\[
    \rho\equiv\frac12,
    \qquad
    R+|\nabla f|^2=f,
\]
the estimate recovers the Cheng--Ribeiro--Zhou \cite{CRZ} volume growth estimate for
geodesic balls, and the equality case gives the Gaussian shrinking soliton.
\end{remark}
The estimate \eqref{eq:polynomial-growth-almost} still involves the
radius-dependent quantity \(A_f(R)\). We later control this quantity by means
of quadratic estimates for the potential function, and thereby obtain volume
growth of order at most \(R^n\).

\section{New volume comparison results}
\label{sec:volume_comparison}

In this section, we present the proof of Theorem \ref{thm1.1} and derive related volume comparison theorems for geodesic spheres and balls.

\begin{proof}[Proof of Theorem \ref{thm1.1}]
    We begin by establishing the geometric quantities along radial geodesics. Set
    \[
        a := \frac{n-1}{2}.
    \]

    Let \(J(r,\theta)d\theta\) be the Riemannian area element of the geodesic
sphere in geodesic polar coordinates, and let
\[
    \omega(r,\theta)=\partial_r\log J(r,\theta)
\]
be its mean curvature. Denote by
\[
    \mathcal{A}=\operatorname{Hess} r\big|_{\{\partial_r\}^{\perp}}
\]
the second fundamental form of the geodesic sphere, which gives
\[
    \omega=\operatorname{tr}\mathcal{A}.
\]

    The standard Riccati identity for the distance function is given by
    \[
        \dot{\omega}+|\mathcal{A}|^2+\operatorname{Ric}(\partial_r,\partial_r)=0.
    \]
    By introducing the trace-free part of the second fundamental form, we can
    express this identically as
    \begin{equation}\label{eq:riccati-with-defect}
        \dot{\omega}+\frac{\omega^2}{n-1}
        +\operatorname{Ric}(\partial_r,\partial_r)=-D_0,
    \end{equation}
    where \(D_0\) is defined by
    \[
        D_0:=|\mathcal{A}|^2-\frac{\omega^2}{n-1}\ge0,
    \]
    with the non-negativity guaranteed by the Cauchy-Schwarz inequality.

    Multiplying \eqref{eq:riccati-with-defect} by the test function
    \(p(t,\theta)\) and integrating from \(\varepsilon\) to \(r\), we obtain
    \[
        \int_\varepsilon^r p(t,\theta)\dot{\omega}(t,\theta)\,dt
        +\int_\varepsilon^r
        \frac{p(t,\theta)\omega^2(t,\theta)}{n-1}\,dt
        +\int_\varepsilon^r
        p(t,\theta)\operatorname{Ric}(\partial_r,\partial_r)(t,\theta)\,dt
        =
        -\int_\varepsilon^r p(t,\theta)D_0(t,\theta)\,dt.
    \]
    Integrating the first term by parts and completing the square for
    \(\omega\) yields
    \begin{align}
        & p(r,\theta)\omega(r,\theta)
        -p(\varepsilon,\theta)\omega(\varepsilon,\theta)
        +\int_\varepsilon^r
        \frac{p(t,\theta)}{n-1}
        \left(
            \omega(t,\theta)
            -\frac{(n-1)\dot{p}(t,\theta)}{2p(t,\theta)}
        \right)^2dt
        +\int_\varepsilon^r p(t,\theta)D_0(t,\theta)\,dt
        \notag \\
        &\qquad =
        \int_\varepsilon^r
        \left(
            \frac{(n-1)\dot{p}^2(t,\theta)}{4p(t,\theta)}
            -
            p(t,\theta)\operatorname{Ric}(\partial_r,\partial_r)(t,\theta)
        \right)dt .
        \label{eq:complete-square}
    \end{align}

    By the asymptotic assumption on \(p\) in Theorem \ref{thm1.1}, we have
\[
    p(\varepsilon,\theta)=\varepsilon^2+O(\varepsilon^4).
\]
Together with
\[
    \omega(\varepsilon,\theta)=\frac{n-1}{\varepsilon}+O(\varepsilon),
\]
this gives
\[
    p(\varepsilon,\theta)\omega(\varepsilon,\theta)=O(\varepsilon)\to0
    \qquad\text{as }\varepsilon\to0^+.
\]
Letting \(\varepsilon\to0\) in
    \eqref{eq:complete-square} and utilizing \eqref{RCG} we have
    \[
        \operatorname{Ric}(\partial_r,\partial_r)
        +\ddot{f}-\rho
        =
        \bigl(\operatorname{Ric}+\operatorname{Hess}f-\rho g\bigr)
        (\partial_r,\partial_r)
        \ge0,
    \]
    we deduce the exact identity
    \begin{equation}\label{eq:key-identity}
        p(r,\theta)\omega(r,\theta)+E(r,\theta)
        =
        \int_0^r
        \left(
            p(t,\theta)(\ddot{f}-\rho)(t,\theta)
            +
            \frac{(n-1)\dot{p}^2(t,\theta)}{4p(t,\theta)}
        \right)dt,
    \end{equation}
    where
    \begin{equation}\label{eq:defect-E}
        E(r,\theta)
        :=
        \int_0^r
        \bigg[
            \frac{p(t,\theta)}{n-1}
            \left(
                \omega(t,\theta)
                -
                \frac{(n-1)\dot{p}(t,\theta)}{2p(t,\theta)}
            \right)^2
            +
            p(t,\theta)D_0(t,\theta)
            +
            p(t,\theta)
            \bigl(
                \operatorname{Ric}(\partial_r,\partial_r)(t,\theta)
                +\ddot{f}(t,\theta)
                -\rho(t,\theta)
            \bigr)
        \bigg]dt
        \ge0.
    \end{equation}
The integral defining \(E(r,\theta)\) is well defined near \(t=0\). Indeed,
the standard expansions in geodesic polar coordinates give
\[
    \omega(t,\theta)=\frac{n-1}{t}+O(t),
    \qquad
    \mathcal A(t,\theta)=\frac1t I+O(t),
\]
as \(t\to0^+\). On the other hand, the assumptions on \(p\) imply
\[
    \frac{(n-1)\dot p(t,\theta)}{2p(t,\theta)}
    =
    \frac{n-1}{t}+O(t).
\]
Hence
\[
    \omega(t,\theta)
    -
    \frac{(n-1)\dot p(t,\theta)}{2p(t,\theta)}
    =
    O(t),
    \qquad
    D_0(t,\theta)=O(t^2).
\]
Since \(p(t,\theta)=t^2+O(t^4)\), the first two terms in the integrand of
\eqref{eq:defect-E} are \(O(t^4)\). Moreover,
\(\operatorname{Ric}(\partial_r,\partial_r)\), \(\ddot f\), and \(\rho\) are
bounded near \(o\), and therefore the last term is \(O(t^2)\). Thus the
integrand in \eqref{eq:defect-E} is locally integrable near \(t=0\), and
\(E(r,\theta)\) is well defined.

    Dividing \eqref{eq:key-identity} by \(p(r,\theta)\) and subtracting the exact
    logarithmic derivative
    \[
        \partial_r\log \bigl(p(r,\theta)^a\bigr)
        =
        a\frac{\dot{p}(r,\theta)}{p(r,\theta)}
        =
        \frac{n-1}{2}\frac{\dot{p}(r,\theta)}{p(r,\theta)},
    \]
    we arrive at the differential equation for the Jacobian ratio:
    \begin{equation}\label{eq:ratio-before-K}
        \partial_r\log
        \left(
            \frac{J(r,\theta)}{p(r,\theta)^a}
        \right)
        =
        \frac{1}{p(r,\theta)}
        \int_0^r
        \left(
            p(t,\theta)(\ddot{f}-\rho)(t,\theta)
            +
            \frac{(n-1)\dot{p}^2(t,\theta)}{4p(t,\theta)}
        \right)dt
        -
        \frac{n-1}{2}\frac{\dot{p}(r,\theta)}{p(r,\theta)}
        -
        \frac{E(r,\theta)}{p(r,\theta)}.
    \end{equation}
By the asymptotic assumption on \(p\), we have \(\dot p(0,\theta)=0\). Hence
\[
    \frac{n-1}{2}\dot p(r,\theta)
    =
    \int_0^r \frac{n-1}{2}\ddot p(t,\theta)\,dt.
\] 
    Substituting this into \eqref{eq:ratio-before-K} and factoring out
    \(p(t,\theta)\) from the integrand refines the equation to
    \begin{equation}\label{eq:ratio-differential}
        \partial_r\log
        \left(
            \frac{J(r,\theta)}{p(r,\theta)^a}
        \right)
        =
        \frac{1}{p(r,\theta)}
        \int_0^r
        p(t,\theta)
        \bigl(
            \ddot{f}(t,\theta)
            -
            \rho(t,\theta)
            -
            \mathcal{K}_p(t,\theta)
        \bigr)dt
        -
        \frac{E(r,\theta)}{p(r,\theta)},
    \end{equation}
    where
    \begin{equation}\label{eq:correct-Kp}
        \mathcal{K}_p(r,\theta)
        =
        \frac{n-1}{2}
        \frac{\ddot{p}(r,\theta)}{p(r,\theta)}
        -
        \frac{n-1}{4}
        \frac{\dot{p}^2(r,\theta)}{p^2(r,\theta)}.
    \end{equation}
    Since \(E(r,\theta)\ge0\), we obtain the differential inequality
    \begin{equation}\label{eq:ratio-ineq}
        \partial_r\log
        \left(
            \frac{J(r,\theta)}{p(r,\theta)^a}
        \right)
        \le
        \frac{1}{p(r,\theta)}
        \int_0^r
        p(t,\theta)
        \bigl(
            \ddot{f}(t,\theta)
            -
            \rho(t,\theta)
            -
            \mathcal{K}_p(t,\theta)
        \bigr)dt.
    \end{equation}

    By the standard expansion in normal coordinates, we have
    \(J(r,\theta)=r^{n-1}(1+O(r^2))\) and
    \(p(r,\theta)^a=r^{n-1}(1+O(r^2))\). Thus,
    \[
        \frac{J(r,\theta)}{p(r,\theta)^a}\to1
    \]
    as \(r\to0^+\). Integrating \eqref{eq:ratio-ineq} from \(0\) to
    \(r<R\le \operatorname{inj}(o)\) establishes the pointwise area-density
    bound:
    \begin{equation}\label{eq:J-comparison}
        J(r,\theta)
        \le
        p(r,\theta)^a
        \exp
        \left\{
            \int_0^r
            \frac{1}{p(s,\theta)}
            \int_0^s
            p(t,\theta)
            \bigl(
                \ddot{f}(t,\theta)
                -
                \rho(t,\theta)
                -
                \mathcal{K}_p(t,\theta)
            \bigr)dt\,ds
        \right\}.
    \end{equation}

    Let us define the right-hand side of \eqref{eq:J-comparison} as
    \(\Psi(r,\theta)\). Then the inequality reads
    \begin{equation}\label{J controll}
       J(r,\theta)\le \Psi(r,\theta)
       \qquad \text{for }0<r<R.
    \end{equation}

    Substituting the definition of \(\Psi(r,\theta)\), we obtain the desired
    volume comparison theorem:
    \begin{equation}\label{eq:correct-volume-comparison}
        \operatorname{Vol}(B_o(R))
        \le
        \int_{\mathbb{S}^{n-1}}
        \int_0^R
        p(s,\theta)^{\frac{n-1}{2}}
        \exp
        \left\{
            \int_0^s
            \frac{1}{p(t,\theta)}
            \int_0^t
            p(u,\theta)
            \bigl(
                \ddot{f}(u,\theta)
                -
                \rho(u,\theta)
                -
                \mathcal{K}_p(u,\theta)
            \bigr)du\,dt
        \right\}ds\,d\theta.
    \end{equation}

    We now turn to the equality case. Suppose equality holds in
    \eqref{eq:correct-volume-comparison} on \(B_o(R)\). Since \(J\le\Psi\) and
    both functions are continuous on \((0,R]\times\mathbb{S}^{n-1}\), the
    equality of their integrals implies that the integrands must coincide
    pointwise:
    \[
        J(s,\theta)=\Psi(s,\theta)
        \qquad
        \text{for all }0<s\le R,\ \theta\in\mathbb{S}^{n-1}.
    \]
    Comparing this exact identity with the differential formula
    \eqref{eq:ratio-differential}, we immediately deduce that
    \[
        \int_0^r
        \frac{E(s,\theta)}{p(s,\theta)}\,ds=0
        \qquad
        \text{for every }0<r\le R.
    \]
    Since \(E(s,\theta)\ge0\) and \(p(s,\theta)>0\) for \(s>0\), it follows that
    the cumulative error vanishes:
    \[
        E(s,\theta)=0
        \qquad
        \text{for all }0<s\le R.
    \]
    By the definition of \(E(r,\theta)\) in \eqref{eq:defect-E}, each
    non-negative term in the integrand must vanish identically. This
    simultaneously forces three pointwise conditions. First, the completed
    square term vanishes:
    \begin{equation}\label{eq:equality-mean-curvature}
        \omega(r,\theta)
        =
        \frac{(n-1)\dot{p}(r,\theta)}{2p(r,\theta)}
        \qquad
        \text{for all }0<r\le R,
    \end{equation}
    which is exactly \eqref{eqst}. Second, \(D_0(r,\theta)=0\). Third, the lower
    curvature bound attains equality:
    \begin{equation}\label{eq:radial-Bakry-Emery-equality}
        \operatorname{Ric}(\partial_r,\partial_r)
        +\ddot{f}
        -\rho=0.
    \end{equation}

    Since \(D_0=0\), we have
    \[
        |\mathcal{A}|^2=\frac{\omega^2}{n-1}.
    \]
    Substituting this back into the standard Riccati identity
    \[
        \dot{\omega}+|\mathcal{A}|^2
        +\operatorname{Ric}(\partial_r,\partial_r)=0,
    \]
    alongside \eqref{eq:radial-Bakry-Emery-equality}, we obtain
    \[
        \ddot{f}-\rho
        =
        \dot{\omega}
        +
        \frac{\omega^2}{n-1}.
    \]
    Substituting \eqref{eq:equality-mean-curvature} into the right-hand side
    directly provides
    \begin{align*}
        \ddot{f}-\rho
        &=
        \frac{n-1}{2}
        \left(
            \frac{\ddot{p}}{p}
            -
            \frac{\dot{p}^2}{p^2}
        \right)
        +
        \frac{n-1}{4}
        \frac{\dot{p}^2}{p^2}
        \\
        &=
        \frac{n-1}{2}\frac{\ddot{p}}{p}
        -
        \frac{n-1}{4}\frac{\dot{p}^2}{p^2}
        =
        \mathcal{K}_p.
    \end{align*}
    Equivalently, this can be rearranged as
    \[
        \frac{2(\ddot{p}p-\dot{p}^2)}{p^2}
        +
        \frac{\dot{p}^2}{p^2}
        =
        \frac{4(\ddot{f}-\rho)}{n-1},
    \]
    which is precisely the model Riccati equation \eqref{ricatii}.

    Conversely, suppose \eqref{ricatii} and \eqref{eqst} hold on \(B_o(R)\)
    with \(0<R<\operatorname{inj}(o)\). From \eqref{eqst}, we have
    \[
        \partial_r\log J
        =
        \partial_r\log p(r,\theta)^{\frac{n-1}{2}}.
    \]
    Therefore,
    \[
        \partial_r
        \log
        \left(
            \frac{J}{p(r,\theta)^{\frac{n-1}{2}}}
        \right)=0.
    \]
    Using the fact that
    \[
        \frac{J(r,\theta)}{p(r,\theta)^{\frac{n-1}{2}}}\to1
    \]
    as \(r\to0^+\), we conclude that
    \[
        J(r,\theta)=p(r,\theta)^{\frac{n-1}{2}}
    \]
    for all \(0<r\le R\). On the other hand, \eqref{ricatii} is precisely
    \[
        \ddot{f}-\rho-\mathcal{K}_p=0.
    \]
    Thus, the exponential factor in \eqref{eq:correct-volume-comparison} is
    equal to \(1\), and hence
    \[
        \operatorname{Vol}(B_o(r))
        =
        \int_{\mathbb{S}^{n-1}}
        \int_0^r
        p(s,\theta)^{\frac{n-1}{2}}\,ds\,d\theta
        \qquad
        \text{for all }0<r\le R\le \operatorname{inj}(o).
    \]
\end{proof}
If $p(x)$ satisfies \eqref{ricatii} and the conditions of Theorem \ref{thm1.1}, we obtain a volume element comparison:

\begin{lemma}\label{lem:jacobian-ratio-comparison}
Let \((M^n,g)\) be a complete Riemannian manifold satisfying \eqref{RCG}
for some \(f\in C^2(M^n)\) and \(\rho\in C^2(M^n)\). Fix \(o\in M\), and let
\[
    0<R<\operatorname{inj}(o).
\]
Let \(p\in C^2(B_o(R)\setminus\{o\})\) be a positive function satisfying the
assumptions on \(p\) in Theorem \ref{thm1.1}. Assume moreover that \(p\)
satisfies the Riccati equation \eqref{ricatii} on \(B_o(R)\setminus\{o\}\). Then, for every
\(0<r_1<r_2<R\) and every \(\theta\in\mathbb S^{n-1}\),
\begin{equation}\label{eq:jacobian-ratio-comparison}
    \frac{J(r_2,\theta)}{J(r_1,\theta)}
    \le
    \frac{p^{\frac{n-1}{2}}(r_2,\theta)}
         {p^{\frac{n-1}{2}}(r_1,\theta)}.
\end{equation}
\end{lemma}

\begin{proof}
Since \(R<\operatorname{inj}(o)\), the polar coordinates are regular on
\(B_o(R)\setminus\{o\}\). By \eqref{eq:ratio-ineq}, we have
\begin{equation}\label{ratio_deri}
    \partial_r
    \log\left(
        \frac{J(r,\theta)}
             {p^{\frac{n-1}{2}}(r,\theta)}
    \right)
    \le
    \frac{1}{p(r,\theta)}
    \int_0^r
    p(s,\theta)
    \bigl(
        \ddot f(s,\theta)
        -
        \rho(s,\theta)
        -
        \mathcal K_p(s,\theta)
    \bigr)\,ds .
\end{equation}
By \eqref{ricatii},
\[
    \ddot f(r,\theta)-\rho(r,\theta)-\mathcal K_p(r,\theta)=0.
\]
Hence
\begin{equation}\label{JCP}
       \partial_r
    \log\left(
        \frac{J(r,\theta)}
             {p^{\frac{n-1}{2}}(r,\theta)}
    \right)
    \le 0
    \qquad
    \text{for }0<r<R.
\end{equation}
Therefore the function
\[
    r\longmapsto
    \frac{J(r,\theta)}
         {p^{\frac{n-1}{2}}(r,\theta)}
\]
is non-increasing on \((0,R)\). Thus, for \(0<r_1<r_2<R\),
\[
    \frac{J(r_2,\theta)}
         {p^{\frac{n-1}{2}}(r_2,\theta)}
    \le
    \frac{J(r_1,\theta)}
         {p^{\frac{n-1}{2}}(r_1,\theta)}.
\]
This is exactly \eqref{eq:jacobian-ratio-comparison}.
\end{proof}

\begin{remark}
	Since $\Delta r = \partial_r \log J$, inequality \eqref{JCP} can be rewritten as
	\[
	\Delta r \le \frac{(n-1)\dot{p}(r,\theta)}{2p(r,\theta)},
	\]
	providing a Laplacian comparison theorem.
\end{remark}

Following \cite{Pe}, we obtain a volume comparison theorem for geodesic balls:

\begin{theorem}\label{vcieq}
Assume that the hypotheses of Theorem \ref{thm1.1} hold on \(B_o(R)\), with
\(0<R<\operatorname{inj}(o)\). Assume in addition that \(p=p(r)\) is radial.
If \(p\) satisfies \eqref{ricatii} on \(B_o(R)\setminus\{o\}\), then for every
\(0<r_1<r_2\le R\),
\begin{equation}\label{eq:relative-volume-comparison}
    \frac{\operatorname{Vol}(B_o(r_2))}
         {\operatorname{Vol}(B_o(r_1))}
    \le
    \frac{V_p(r_2)}{V_p(r_1)},
\end{equation}
where
\[
    V_p(r)
    =
    \int_{\mathbb S^{n-1}}\int_0^r
    p^{\frac{n-1}{2}}(s)\,ds\,d\theta .
\]
\end{theorem}
\begin{proof}
By Lemma \ref{lem:jacobian-ratio-comparison}, for each fixed
\(\theta\in\mathbb S^{n-1}\), the function
\[
    r\longmapsto
    \frac{J(r,\theta)}{p^{\frac{n-1}{2}}(r)}
\]
is non-increasing on \((0,R]\). Since \(p=p(r)\) is radial, integration over
\(\mathbb S^{n-1}\) gives that
\[
    r\longmapsto
    \frac{\operatorname{Area}(\partial B_o(r))}
         {\int_{\mathbb S^{n-1}}p^{\frac{n-1}{2}}(r)\,d\theta}
\]
is non-increasing. The standard relative volume argument then implies that
\[
    r\longmapsto
    \frac{\operatorname{Vol}(B_o(r))}{V_p(r)}
\]
is non-increasing. Hence, for \(0<r_1<r_2\le R\),
\[
    \frac{\operatorname{Vol}(B_o(r_2))}{V_p(r_2)}
    \le
    \frac{\operatorname{Vol}(B_o(r_1))}{V_p(r_1)}.
\]
This proves \eqref{eq:relative-volume-comparison}.
\end{proof}

Now that we have established the volume comparison for geodesic balls, we turn our attention to the local rigidity. In the classical volume comparison theorems, the equality case implies that the geodesic ball is locally isometric to a ball in the corresponding model space. The following proof rigorously establishes this local metric rigidity for gradient Ricci almost solitons, as stated in Theorem \ref{thm3.1}.

\begin{proof}[Proof of Theorem \ref{thm3.1}]
    First, since \(p\) satisfies \eqref{ricatii}, Theorem \ref{thm1.1} gives
    \[
        \operatorname{Vol}(B_o(r))
        \le
        \int_{\mathbb S^{n-1}}\int_0^r
        p^{\frac{n-1}{2}}(s,\theta)\,ds\,d\theta
    \]
    for every \(0<r\le \min\{R,\operatorname{inj}(o)\}\).

    Suppose that equality holds in \eqref{vbcp} for some fixed
    \(0<r_0\le \min\{R,\operatorname{inj}(o)\}\). Then equality holds in the
    Bakry--\'Emery volume comparison of Theorem \ref{thm1.1} on the geodesic
    ball \(B_o(r_0)\). Equality in \eqref{eq:correct-volume-comparison} gives \(E\equiv0\); hence, by \eqref{eq:defect-E}, \(D_0(s,\theta)=0\) for all
\(0<s\le r_0\) and \(\theta\in\mathbb S^{n-1}\).
    
    Recall that \(D_0=|\mathcal A|^2-\frac{\omega^2}{n-1}\), where
    \[
        \mathcal A=\operatorname{Hess}s\big|_{\{\partial_s\}^{\perp}}
    \]
    is the second fundamental form of the geodesic sphere \(\partial B_s(o)\)
    and \(\omega=\operatorname{tr}\mathcal A=\Delta s\). Consequently, 
   \begin{equation}\label{Hessian expression}
        \operatorname{Hess} s
        =
        \frac{\omega(s,\theta)}{n-1}g_\theta,
   \end{equation}
    where \(g_\theta\) denotes the induced metric on \(\partial B_s(o)\).
    
    By the equality case of Theorem \ref{thm1.1}, we also have
\[
    \omega(s,\theta)=\frac{(n-1)\dot p(s,\theta)}{2p(s,\theta)}.
\]
Substituting this into \eqref{Hessian expression} yields
    \begin{equation}\label{eq:hessian-p}
        \operatorname{Hess} s
        =
        \frac{\dot p(s,\theta)}{2p(s,\theta)}g_\theta.
    \end{equation}

    In polar coordinates \((s,\theta)\) on \(B_o(r_0)\setminus\{o\}\), the
    metric is expressed as
    \[
        g=ds^2+g_{ij}(s,\theta)d\theta^i d\theta^j.
    \]
    Utilizing 
    \[
        \mathcal L_{\partial_s}g=2\operatorname{Hess}s,
    \]
    equation \eqref{eq:hessian-p} translates to 
    \begin{equation}\label{eq:metric_evol}
        \frac{\partial}{\partial s}g_{ij}(s,\theta)
        =
        \frac{\dot p(s,\theta)}{p(s,\theta)}g_{ij}(s,\theta).
    \end{equation}
    
    For each fixed \(\theta\), integrating this ordinary differential equation
    from \(s=\epsilon\) to \(s=r\le r_0\) gives
    \begin{equation}\label{eq:metric_int}
        g_{ij}(r,\theta)
        =
        \frac{p(r,\theta)}{p(\epsilon,\theta)}g_{ij}(\epsilon,\theta).
    \end{equation}

    As \(M\) is smooth at the pole \(o\), the metric in polar coordinates
    satisfies the standard asymptotic expansion
    \[
        g_{ij}(\epsilon,\theta)
        =
        \epsilon^2h_{ij}(\theta)+o(\epsilon^2)
        \qquad\text{as }\epsilon\to0^+,
    \]
    where \(h_{ij}\) is the standard metric on \(\mathbb S^{n-1}\).
   By the asymptotic assumption \(p(r,\theta)=r^2+O(r^4)\) near \(r=0\), we have
\[
    p(\varepsilon,\theta)=\varepsilon^2+O(\varepsilon^4).
\]
    Taking \(\epsilon\to0^+\) in \eqref{eq:metric_int}, we obtain
    \[
        g_{ij}(r,\theta)
        =
        p(r,\theta)h_{ij}(\theta).
    \]
    Hence, the metric on \(B_o(r_0)\) splits as the twisted product metric
    \[
        g=dr^2+p(r,\theta)g_{\mathbb S^{n-1}}.
    \]
    This concludes the proof that \(B_o(r_0)\) is locally isometric to the
    corresponding model ball.

    Conversely, if \(B_o(r_0)\) is isometric to such a model ball, then
\(J(r,\theta)=p^{(n-1)/2}(r,\theta)\) for \(0<r\le r_0\), and hence equality
holds in \eqref{vbcp} with \(r=r_0\).
\end{proof}
\begin{remark}
Taking \(\rho=(n-1)H\) and \(p(r,\theta)=\mathrm{sn}_H^2(r)\), we have
\[
    \mathcal K_p=-(n-1)H.
\]
Hence
\[
    \ddot f-\rho-\mathcal K_p=\ddot f.
\]
If \(\dot f\ge -a\), then \eqref{eq:ratio-ineq} gives, for
\(0<r_1<r_2<R\) and \(R<\frac{\pi}{2\sqrt H}\) when \(H>0\),
\[
    J(r_2,\theta)
    \le
    e^{a(r_2-r_1)+f(r_2,\theta)-f(r_1,\theta)}
    \frac{\mathrm{sn}_H^{\,n-1}(r_2)}
         {\mathrm{sn}_H^{\,n-1}(r_1)}
    J(r_1,\theta).
\]
Equivalently,
\[
    e^{-f(r_2,\theta)}J(r_2,\theta)
    \le
    e^{a(r_2-r_1)}
    \frac{\mathrm{sn}_H^{\,n-1}(r_2)}
         {\mathrm{sn}_H^{\,n-1}(r_1)}
    e^{-f(r_1,\theta)}J(r_1,\theta).
\]
Integrating this weighted density comparison gives
\[
    \frac{\mathrm{Vol}_f(B_o(r_2))}
         {\mathrm{Vol}_f(B_o(r_1))}
    \le
    e^{aR}
    \frac{V_H(r_2)}{V_H(r_1)},
\]
where
\[
    \mathrm{Vol}_f(B_o(r))
    =
    \int_0^r\int_{\mathbb S^{n-1}}
    e^{-f(k,\theta)}J(k,\theta)\,d\theta\,dk
\]
and
\[
    V_H(r)
    =
    \int_0^r\int_{\mathbb S^{n-1}}
    \mathrm{sn}_H^{\,n-1}(k)\,d\theta\,dk .
\]

If instead \(\rho=(n-1)H\) and \(|f|\le c\), then applying
\eqref{eq:ratio-ineq} with \(p(r,\theta)=\mathrm{sn}_H^2(r)\) gives, for
\(0<r_1<r_2<R\) and \(R<\frac{\pi}{4\sqrt H}\) when \(H>0\),
\[
    J(r_2,\theta)
    \le
    e^{f(r_2,\theta)-f(r_1,\theta)}
    \frac{\mathrm{sn}_H^{\,n-1+4c}(r_2)}
         {\mathrm{sn}_H^{\,n-1+4c}(r_1)}
    J(r_1,\theta).
\]
Thus
\[
    e^{-f(r_2,\theta)}J(r_2,\theta)
    \le
    \frac{\mathrm{sn}_H^{\,n-1+4c}(r_2)}
         {\mathrm{sn}_H^{\,n-1+4c}(r_1)}
    e^{-f(r_1,\theta)}J(r_1,\theta).
\]
Integrating again yields
\[
    \frac{\mathrm{Vol}_f(B_o(r_2))}
         {\mathrm{Vol}_f(B_o(r_1))}
    \le
    \frac{V_H^{\,n+4c}(r_2)}
         {V_H^{\,n+4c}(r_1)},
\]
where
\[
    V_H^{\,n+4c}(r)
    =
    \int_0^r\int_{\mathbb S^{n-1}}
    \mathrm{sn}_H^{\,n-1+4c}(k)\,d\theta\,dk .
\]
Therefore, by choosing special test functions \(p\), the comparison formula
recovers the volume comparison results of Wei--Wylie \cite{WW}.
\end{remark}

\section{Rigidity results}
\label{sec:rigidity}

In this section, we prove Theorem~\ref{hhg}, establishing the global rigidity of complete noncompact Riemannian manifolds satisfying the curvature condition
\eqref{RCG}, and in particular gradient Ricci almost solitons, when the volume growth attains its upper bound for all $R$.

We first prove Theorem~\ref{hhg} by using the global volume comparison.

\begin{proof}[Proof of Theorem \ref{hhg}]
    To establish global rigidity, we utilize the global volume equality to prove that the cut locus of the manifold $M^n$ must be empty.

By hypothesis, the test function \(p(r,\theta)\) is defined globally and
satisfies \(p(r,\theta)>0\) for all \(r>0\). The assumption \(p(r,\theta)>0\) for all \(r>0\) ensures that the model density \(p^{(n-1)/2}(r,\theta)\) is strictly positive for all \(r>0\).

   Let \(c:\mathbb S^{n-1}\to(0,\infty]\) be the cut function based at \(o\). For
each \(\theta\in\mathbb S^{n-1}\), \(c(\theta)\) is the supremum of all
\(s>0\) such that the radial geodesic \(t\mapsto\exp_o(t\theta)\) is minimizing
on \([0,s]\). Equivalently, if a cut point occurs in the direction \(\theta\),
then \(c(\theta)\) is the distance from \(o\) to that cut point; if no cut point
occurs, then \(c(\theta)=\infty\). As established in Petersen \cite[Section 5.7.3]{Pe}, the segment domain $\operatorname{seg}(o) \subset T_o M$ is a closed star-shaped subset with an open star-interior $\operatorname{seg}^0(o)$. By the standard properties of the segment domain, the function
\(c(\theta)\) is upper semicontinuous. Hence the set
\(\{\theta:c(\theta)<a\}\) is open for every \(a>0\). Specifically, to establish upper semi-continuity, consider a sequence $\theta_i \to \theta$ in $\mathbb{S}^{n-1}$. We can choose lengths $t_i \le c(\theta_i)$ such that $t_i \to \limsup_{i \to \infty} c(\theta_i)$. By definition, the tangent vectors $t_i\theta_i$ belong to the segment domain $\operatorname{seg}(o)$. Because $\operatorname{seg}(o)$ is a closed subset of $T_oM$, their limit vector $(\limsup_{i \to \infty} c(\theta_i))\theta$ must also belong to $\operatorname{seg}(o)$. This forces $\limsup_{i \to \infty} c(\theta_i) \le c(\theta)$, proving upper semi-continuity. Conversely, the openness of the interior $\operatorname{seg}^0(o)$ implies that for any length $t < c(\theta)$, the open ball in $T_o M$ centered at $t\theta$ is completely contained in $\operatorname{seg}^0(o)$; thus, rays in sufficiently close directions remain minimizing at least up to length $t$, establishing lower semi-continuity. The combination of upper and lower semi-continuity rigorously guarantees that $c(\theta)$ is continuous on the unit sphere wherever it takes finite values.

  Since \(p\) satisfies \eqref{ricatii}, inequality \eqref{J controll} on the regular set \(0<s<c(\theta)\)
reduces to
\[
    J(s,\theta)\le p^{\frac{n-1}{2}}(s,\theta).
\]
Hence,
\[
\begin{aligned}
\operatorname{Vol}(B_o(R))
&=
\int_{\mathbb S^{n-1}}
\int_0^{\min\{R,c(\theta)\}}
J(s,\theta)\,ds\,d\theta \\
&\le
\int_{\mathbb S^{n-1}}
\int_0^{\min\{R,c(\theta)\}}
p^{\frac{n-1}{2}}(s,\theta)\,ds\,d\theta \\
&\le
\int_{\mathbb S^{n-1}}
\int_0^R
p^{\frac{n-1}{2}}(s,\theta)\,ds\,d\theta.
\end{aligned}
\]

    We claim that $c(\theta) = \infty$ for all $\theta \in \mathbb{S}^{n-1}$. Suppose conversely, that there exists a direction $\theta_0 \in \mathbb{S}^{n-1}$ such that $c(\theta_0) = R_0 < \infty$. By the continuity of $c(\theta)$ at $\theta_0$, for a sufficiently small fixed $\varepsilon > 0$, the set $V = \{ \theta \in \mathbb{S}^{n-1} \mid c(\theta) < R_0 + \varepsilon \}$ is a non-empty open neighborhood of $\theta_0$ in $\mathbb{S}^{n-1}$.

   Choose \(R>R_0+\varepsilon\), and define
\[
    F(\theta)
    =
    \int_0^R p(s,\theta)^{\frac{n-1}{2}}\,ds
    -
    \int_0^{c(\theta)} p(s,\theta)^{\frac{n-1}{2}}\,ds
    =
    \int_{c(\theta)}^R p(s,\theta)^{\frac{n-1}{2}}\,ds .
\]
    For all $\theta \in V$, since $c(\theta) < R_0 + \varepsilon < R$, the integration interval $[c(\theta), R]$ has strictly positive length. Moreover, the model density $p^{\frac{n-1}{2}}(s, \theta)$ is strictly positive for $s > 0$. Because $c(\theta)$ is continuous on $V$, $F(\theta)$ is a well-defined, continuous, and strictly positive function on $V$.

    Since $V$ is a non-empty open set in the smooth manifold $\mathbb{S}^{n-1}$, its standard spherical Lebesgue measure is strictly positive ($\mu(V) > 0$). Integrating the strictly positive continuous function $F(\theta)$ over $V$ yields a strictly positive result:
    $$
        \int_V \int_0^{c(\theta)} p^{\frac{n-1}{2}}(s, \theta) \,ds \,d\theta < \int_V \int_0^R p^{\frac{n-1}{2}}(s, \theta) \,ds \,d\theta.
    $$
    On the complement $\mathbb{S}^{n-1} \setminus V$, we trivially have $\min(R, c(\theta)) \le R$, implying that 
     $$
        \int_{\mathbb{S}^{n-1} \setminus V} \int_0^{\min\{R,c(\theta)\}} J(s,\theta) \,ds \,d\theta \le \int_{\mathbb{S}^{n-1} \setminus V} \int_0^R p^{\frac{n-1}{2}}(s, \theta) \,ds \,d\theta.
    $$
    Combining these two regions yields the strict global inequality:
 \begin{equation}
\begin{aligned}
\operatorname{Vol}_{M^n}(B_o(R))
&=
\int_V
\int_0^{c(\theta)}
J(s,\theta)\,ds\,d\theta
+
\int_{\mathbb S^{n-1}\setminus V}
\int_0^{\min\{R,c(\theta)\}}
J(s,\theta)\,ds\,d\theta                                      \\
&\le
\int_V
\int_0^{c(\theta)}
p^{\frac{n-1}{2}}(s,\theta)\,ds\,d\theta
+
\int_{\mathbb S^{n-1}\setminus V}
\int_0^R
p^{\frac{n-1}{2}}(s,\theta)\,ds\,d\theta                       \\
&<
\int_V
\int_0^R
p^{\frac{n-1}{2}}(s,\theta)\,ds\,d\theta
+
\int_{\mathbb S^{n-1}\setminus V}
\int_0^R
p^{\frac{n-1}{2}}(s,\theta)\,ds\,d\theta                       \\
&=
\int_{\mathbb S^{n-1}}
\int_0^R
p^{\frac{n-1}{2}}(s,\theta)\,ds\,d\theta .
\end{aligned}
\end{equation}
    This contradicts the hypothesis that volume equality holds for all $R > 0$. Consequently, $c(\theta) = \infty$ for all $\theta \in \mathbb{S}^{n-1}$, proving that the cut locus $\operatorname{Cut}(o)$ is empty.

   Since \(c(\theta)=\infty\) for every \(\theta\in\mathbb S^{n-1}\), the cut
locus of \(o\) is empty. By the standard characterization of the cut locus,
\(\exp_o\) has no singular points and is one-to-one. By the Hopf--Rinow theorem,
\(\exp_o\) is onto. Hence
\[
    \exp_o:T_oM\to M
\]
is a global diffeomorphism. In particular, \(M^n\) is diffeomorphic to
\(\mathbb R^n\), and therefore \(\pi_k(M^n)=0\) for all \(k\ge1\).

Moreover, \(\operatorname{inj}(o)=\infty\), so Theorem \ref{thm3.1} applies to
\(B_o(R)\) for every \(R>0\). By \eqref{eq:global-volume-equality}, equality
holds in the local comparison on every \(B_o(R)\). Therefore each \(B_o(R)\) is
isometric to the corresponding model ball. Letting \(R\to\infty\), we obtain
the global isometry.
\end{proof}

For gradient Ricci almost solitons with prescribed volume growth, we obtain:

\begin{corollary}\label{thm3.2}
Let \((M^n,g,f,\rho)\) be a complete noncompact gradient Ricci almost soliton
satisfying the assumptions of Theorem \ref{hhg}. Let \(K\ge0\). Suppose that
\[
    p(r)=\mathrm{sn}_{-K}^2(r)
\]
satisfies \eqref{ricatii} for all \(r\ge0\), and that for every \(R>0\),
\[
    \operatorname{Vol}(B_o(R))
    =
    \int_{\mathbb S^{n-1}}\int_0^R
    \mathrm{sn}_{-K}^{\,n-1}(s)\,ds\,d\theta .
\]
Then \((M^n,g)\) is globally isometric to the simply connected space form of
constant sectional curvature \(-K\).
\end{corollary}

\begin{proof}
By assumption, the model function
\[
    p(r)=\mathrm{sn}_{-K}^2(r)
\]
solves \eqref{ricatii} for all \(r\ge0\). Moreover, the volume equality above
is exactly the model volume equality required in Theorem \ref{hhg}. Hence
Theorem \ref{hhg} applies, and \((M^n,g)\) is globally isometric to the polar
model
\[
    dr^2+\mathrm{sn}_{-K}^2(r)g_{\mathbb S^{n-1}}.
\]
This is the standard metric on the simply connected space form of constant
sectional curvature \(-K\).
\end{proof}

\section{Estimates of the potential function and volume growth for shrinking almost solitons}
\label{sec:potential_volume}

In \cite{CZ}, the authors established growth estimates for the potential function of shrinking gradient Ricci solitons. For gradient Ricci almost solitons, a lower bound for the scalar curvature \(R\) is not automatic. In this section, we extend the results of \cite{CZ} by assuming that the scalar curvature is bounded below by a constant $R_{min}$. By combining this with an integral condition on the radial derivative of $\rho(x)$, we derive the quadratic growth estimates for the potential function $f$. These estimates are essential for our subsequent analysis of volume growth and rigidity for shrinking gradient Ricci almost solitons.

First, the scalar curvature of gradient Ricci almost solitons satisfies an equation analogous to that in \cite{CZ}.

\begin{proposition}\label{prop4.1}
	The scalar curvature $R$ of a gradient Ricci almost soliton satisfies
	\[
	\partial_i R = 2R_{ij}\partial_j f + 2(n-1)\rho_i,
	\]
	where $R_{ij}$ denotes the Ricci curvature.
\end{proposition}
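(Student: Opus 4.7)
\textbf{Proof plan for Proposition \ref{prop4.1}.} The strategy is to mimic the classical derivation of $\nabla_i R = 2 R_{ij}\nabla^j f$ for gradient Ricci solitons, now tracking the extra terms produced when $\rho$ is a function rather than a constant. The starting point is the almost soliton equation
\begin{equation*}
R_{ij}+\nabla_i\nabla_j f=\rho\, g_{ij},
\end{equation*}
together with its trace $R+\Delta f = n\rho$.

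First, I would take the divergence $\nabla^i$ of the soliton equation. The right-hand side becomes $\nabla_j\rho$ (since $g$ is parallel and the contraction with $g_{ij}$ picks out the $j$-th component of $\nabla\rho$). On the left, the contracted second Bianchi identity gives $\nabla^i R_{ij} = \tfrac12\nabla_j R$, while the Hessian term is $\nabla^i\nabla_i\nabla_j f = \Delta\nabla_j f$. This yields
\begin{equation*}
\tfrac12\nabla_j R + \Delta\nabla_j f = \nabla_j\rho.
\end{equation*}

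Next, I would apply the standard Bochner-type commutation formula $\Delta\nabla_j f = \nabla_j\Delta f + R_{jk}\nabla^k f$, followed by the traced soliton equation in the form $\nabla_j\Delta f = n\nabla_j\rho - \nabla_j R$, to rewrite the second term. Substituting gives
\begin{equation*}
\tfrac12\nabla_j R + n\nabla_j\rho - \nabla_j R + R_{jk}\nabla^k f = \nabla_j\rho,
\end{equation*}
and rearranging yields exactly $\nabla_j R = 2R_{jk}\nabla^k f + 2(n-1)\nabla_j\rho$, which in the index notation of the proposition is the claimed identity.

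There is no real obstacle here; the calculation is completely formal once the three ingredients (the twice contracted Bianchi identity, the Bochner commutator for $\Delta \nabla_j f$, and the trace of the almost soliton equation) are in place. The only thing to be careful about is that the $n\nabla_j\rho$ coming from $\nabla_j\Delta f$ and the single $\nabla_j\rho$ on the right combine cleanly with the $-\nabla_j R$ and $\tfrac12\nabla_j R$ terms to produce the prefactor $2(n-1)$ rather than $2n$, which is precisely the source of the correction term that distinguishes the almost soliton identity from the classical one.
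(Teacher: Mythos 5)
Your computation is correct: taking the divergence of $R_{ij}+\nabla_i\nabla_j f=\rho g_{ij}$, applying the contracted second Bianchi identity, the commutator $\Delta\nabla_j f=\nabla_j\Delta f+R_{jk}\nabla^k f$, and the traced equation $R+\Delta f=n\rho$ does yield $\nabla_j R=2R_{jk}\nabla^k f+2(n-1)\nabla_j\rho$, with the signs and the $(n-1)$ coefficient coming out as you describe. The paper's own proof is mechanically different though equivalent in substance: it differentiates the un-traced soliton equation to get $\nabla_i R_{jk}=-\nabla_i\nabla_j\nabla_k f+\rho_i g_{jk}$, antisymmetrizes in $i,j$ so that the Ricci identity produces a full Riemann tensor term $R_{ijlk}\partial_l f$ together with $\rho_i g_{jk}-\rho_j g_{ik}$, then traces over $j,k$ and finishes with the contracted Bianchi identity. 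In short, the paper traces last while you trace first; both arguments rest on the same two pillars (a commutation identity for third covariant derivatives of $f$ and the contracted second Bianchi identity), and the $2(n-1)$ factor arises in the paper from tracing $\rho_i g_{jk}-\rho_j g_{ik}$, whereas in your version it arises from the cancellation of $n\nabla_j\rho$ against the single $\nabla_j\rho$ on the right. Your route has the minor advantage of never needing the full Riemann tensor, only its Ricci trace; the paper's route avoids invoking the traced soliton equation as a separate input. Either derivation is acceptable.
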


\begin{proof}
	From the gradient Ricci almost soliton equation, the derivatives of the Ricci curvature satisfy
	\begin{align*}
		\partial_i R_{jk} &= -\nabla_i\nabla_j(\nabla_k f) + \rho_i g_{jk}, \\
		\partial_j R_{ik} &= -\nabla_j\nabla_i(\nabla_k f) + \rho_j g_{ik}.
	\end{align*}
	Subtracting these two equations gives
	\[
	\partial_i R_{jk} - \partial_j R_{ik} = R_{ijlk}\partial_l f + \rho_i g_{jk} - \rho_j g_{ik}.
	\]
	Taking the trace in $j$ and $k$ yields
	\[
	\partial_i R - \partial_j R_{ij} = R_{il}\partial_l f + (n-1)\rho_i.
	\]
	Using the second Bianchi identity for Ricci curvature, we obtain
	\begin{equation}\label{eq5.3}
		\partial_i R = 2R_{ij}\partial_j f + 2(n-1)\rho_i. \qedhere
	\end{equation}
\end{proof}

\begin{remark}
	In this proof and subsequent discussions, we use Einstein summation over repeated indices. The indices $i, j, k, l$ denote normal coordinates.
\end{remark}

Substituting \(R_{ij}=\rho g_{ij}-f_{ij}\) into \eqref{eq5.3}, we obtain the
following identity.

\begin{corollary}\label{cor4.3}
Let \((M^n,g,f,\rho)\) be a gradient Ricci almost soliton. Define
\begin{equation}\label{eq5.4}
      F:=R+|\nabla f|^2.
\end{equation}
Then
\begin{equation}\label{eq:F-gradient}
    \nabla F=2\rho \nabla f+2(n-1)\nabla \rho .
\end{equation}
In particular, along every radial geodesic \(\gamma_\theta(r)\) from \(o\),
\begin{equation}\label{eq:F-radial}
    \dot F(r,\theta)
    =
    2\rho(r,\theta)\dot f(r,\theta)
    +
    2(n-1)\dot \rho(r,\theta).
\end{equation}
\end{corollary}

\begin{proof}
By Proposition \ref{prop4.1},
\[
    \nabla_iR
    =
    2R_{ij}\nabla_j f
    +
    2(n-1)\nabla_i\rho .
\]
Since \(R_{ij}=\rho g_{ij}-f_{ij}\), we get
\[
\begin{aligned}
    \nabla_i(R+|\nabla f|^2)
    &=
    \nabla_iR+2f_{ij}\nabla_j f                                      \\
    &=
    2(\rho g_{ij}-f_{ij})\nabla_j f
    +
    2(n-1)\nabla_i\rho
    +
    2f_{ij}\nabla_j f                                                  \\
    &=
    2\rho \nabla_i f
    +
    2(n-1)\nabla_i\rho .
\end{aligned}
\]
This proves \eqref{eq:F-gradient}. Taking the radial derivative gives
\eqref{eq:F-radial}.
\end{proof}

\begin{theorem}\label{thm4.5}
Let \((M^n,g,f,\rho)\) be a shrinking gradient Ricci almost soliton with
\[
    0<K_2\le \rho(x)\le K_1.
\]
Fix \(o\in M\), and set \(r(x)=d(o,x)\). Assume that the scalar curvature satisfies
\[
    R\ge R_{\min}>-\infty,
\]
and that there exists a constant \(K\ge0\) such that, along every radial
geodesic from \(o\),
\begin{equation}\label{eq:rho-f-integral-lower}
    \int_0^r \dot\rho(s,\theta) f(s,\theta)\,ds\ge -K
    \qquad
    \text{for all }r\ge0.
\end{equation}
Then the potential function \(f\) satisfies
\begin{equation}\label{eq:upper-potential-estimate}
    f(x)\le
    \left(
        \sqrt{\frac{K_1}{2}}\,r(x)+C_2
    \right)^2
\end{equation}
for all \(x\in M\), where \(C_2\) depends only on
\(n,K,K_1,K_2,R_{\min},\rho(o),f(o),F(o)\).
\end{theorem}

\begin{proof}
By \eqref{eq5.4} and the scalar curvature lower bound \(R\ge R_{\min}\), we have
\[
    |\nabla f|^2=F-R\le F-R_{\min}.
\]
Integrating \eqref{eq:F-radial} from \(0\) to \(r\), we obtain
\[
\begin{aligned}
    F(r,\theta)-F(o)
    &=
    \int_0^r
    \bigl(
        2\rho \dot f
        +
        2(n-1)\dot\rho
    \bigr)\,ds                                                   \\
    &=
    \bigl[2\rho(s,\theta)f(s,\theta)\bigr]_0^r
    -
    2\int_0^r \dot\rho(s,\theta)f(s,\theta)\,ds                  \\
    &\quad
    +
    2(n-1)\bigl(\rho(r,\theta)-\rho(o)\bigr).
\end{aligned}
\]
Using \eqref{eq:rho-f-integral-lower} and \(\rho\le K_1\), we get
\[
\begin{aligned}
    F(r,\theta)
    &\le
    2\rho(r,\theta)f(r,\theta)
    -
    2\rho(o)f(o)
    +
    F(o)
    +
    2K
    +
    2(n-1)\bigl(\rho(r,\theta)-\rho(o)\bigr)                    \\
    &\le
    2\rho(r,\theta)f(r,\theta)
    +
    C_F,
\end{aligned}
\]
where
\[
    C_F
    :=
    F(o)-2\rho(o)f(o)+2K+2(n-1)(K_1-\rho(o)).
\]
Therefore
\[
    |\nabla f|^2
    \le
    F-R_{\min}
    \le
    2\rho f+C_0,
\]
where
\[
    C_0:=C_F-R_{\min}
    =
    F(o)-2\rho(o)f(o)+2K+2(n-1)(K_1-\rho(o))-R_{\min}.
\]
Thus
\begin{equation}\label{eq:grad-f-rho}
    |\nabla f|^2
    \le
    2\rho f+C_0.
\end{equation}

Let
\[
    C_0^+:=\max\{C_0,0\}.
\]
Since \(|\nabla f|^2\ge0\) and \(\rho\ge K_2>0\), \eqref{eq:grad-f-rho}
implies
\[
    f\ge -\frac{C_0^+}{2K_2}.
\]
Choose
\[
    B:=1+\frac{C_0^+}{2K_2}.
\]
Then \(f+B>0\). Moreover, from \eqref{eq:grad-f-rho},
\[
\begin{aligned}
    |\nabla f|^2
    &\le
    2\rho f+C_0                                                   \\
    &=
    2\rho(f+B)-2\rho B+C_0                                        \\
    &\le
    2K_1(f+B)+C_0^+ .
\end{aligned}
\]
Set
\[
    C_1:=B+\frac{C_0^+}{2K_1}.
\]
Then \(f+C_1>0\), and
\begin{equation}\label{eq:gradient-f-C1}
    |\nabla f|^2
    \le
    2K_1(f+C_1).
\end{equation}
Hence
\[
    \left|\nabla\sqrt{f+C_1}\right|^2
    =
    \frac{|\nabla f|^2}{4(f+C_1)}
    \le
    \frac{K_1}{2}.
\]
Along a minimizing geodesic from \(o\) to \((r,\theta)\), this gives
\[
    \sqrt{f(r,\theta)+C_1}
    \le
    \sqrt{f(o)+C_1}
    +
    \sqrt{\frac{K_1}{2}}\,r.
\]
Set
\[
    C_2:=\sqrt{f(o)+C_1}.
\]
Then
\[
    f(r,\theta)
    \le
    \left(
        \sqrt{\frac{K_1}{2}}\,r+C_2
    \right)^2,
\]
which proves \eqref{eq:upper-potential-estimate}.
\end{proof}
Following the same argument as in \cite{CZ}, we can derive a lower bound for the potential function using its upper bound and the lower bound of $\rho(x)$.

\begin{theorem}\label{thm4.6}
Let \((M^n,g,f,\rho)\) be a complete noncompact shrinking gradient Ricci
almost soliton. Fix \(o\in M\), and set \(r(x)=d(o,x)\). Assume that
\[
    0<K_2\le \rho(x)\le K_1
\]
and that the scalar curvature satisfies
\[
    R\ge R_{\min}>-\infty.
\]
Suppose that there exists a constant \(K\ge0\) such that, along every radial
minimizing geodesic from \(o\),
\[
    \int_0^r \dot\rho(s,\theta)f(s,\theta)\,ds\ge -K
    \qquad
    \text{for all }r\ge0.
\]
Then there exist constants \(C_3>0\) and \(r_0>0\) such that, for all
\(x\in M\) with \(r(x)\ge r_0\),
\begin{equation}\label{eq:two-sided-potential-growth}
    \frac{\bigl(K_2r(x)-C_3\bigr)^2}{2K_1}
    \le
    f(x)
    \le
    \left(
        \sqrt{\frac{K_1}{2}}\,r(x)+C_2
    \right)^2,
\end{equation}
where \(C_2\) is the constant in Theorem \ref{thm4.5}.
\end{theorem}

\begin{proof}
Let \(x\in M\) with \(s_0=r(x)>2\), and let
\(\gamma:[0,s_0]\to M\) be a unit-speed minimizing geodesic from \(o\) to \(x\).
Set \(X(t)=\dot\gamma(t)\). The second variation of arc length gives
\[
    \int_0^{s_0}\phi^2 \operatorname{Ric}(X,X)\,dt
    \le
    (n-1)\int_0^{s_0}|\dot\phi(t)|^2\,dt
\]
for any piecewise smooth function \(\phi\) with
\(\phi(0)=\phi(s_0)=0\). Following \cite{Ha1}, choose
\[
\phi(t)=
\begin{cases}
    t, & t\in[0,1],\\
    1, & t\in[1,s_0-1],\\
    s_0-t, & t\in[s_0-1,s_0].
\end{cases}
\]
Then
\[
    \int_0^{s_0}\phi^2 \operatorname{Ric}(X,X)\,dt
    \le 2(n-1).
\]
Therefore
\begin{align*}
    \int_1^{s_0-1}\operatorname{Ric}(X,X)\,dt
    &=
    \int_1^{s_0-1}\phi^2\operatorname{Ric}(X,X)\,dt  \\
    &=
    \int_0^{s_0}\phi^2\operatorname{Ric}(X,X)\,dt
    -
    \int_0^1\phi^2\operatorname{Ric}(X,X)\,dt          \\
    &\quad
    -
    \int_{s_0-1}^{s_0}\phi^2\operatorname{Ric}(X,X)\,dt \\
    &\le
    2(n-1)
    +
    \max_{B_o(1)}|\operatorname{Ric}|
    -
    \int_{s_0-1}^{s_0}\phi^2\operatorname{Ric}(X,X)\,dt.
\end{align*}

Using the almost soliton equation along \(\gamma\),
\[
    \nabla_X\dot f=\rho-\operatorname{Ric}(X,X),
\]
we get
\begin{align*}
    \dot f(\gamma(s_0-1))-\dot f(\gamma(1))
    &=
    \int_1^{s_0-1}\nabla_X\dot f\,dt                         \\
    &=
    \int_1^{s_0-1}\rho(\gamma(t))\,dt
    -
    \int_1^{s_0-1}\operatorname{Ric}(X,X)\,dt                  \\
    &\ge
    K_2(s_0-2)
    -
    2(n-1)
    -
    \max_{B_o(1)}|\operatorname{Ric}|                          \\
    &\quad
    +
    \int_{s_0-1}^{s_0}\phi^2\operatorname{Ric}(X,X)\,dt .
\end{align*}
Rearranging, we obtain
\begin{equation}\label{eq:ric_upper}
\begin{aligned}
    \int_{s_0-1}^{s_0}\phi^2\operatorname{Ric}(X,X)\,dt
    &\le
    \dot f(\gamma(s_0-1))-\dot f(\gamma(1))                    \\
    &\quad
    -K_2(s_0-2)
    +2(n-1)
    +\max_{B_o(1)}|\operatorname{Ric}|.
\end{aligned}
\end{equation}

On the other hand, using
\[
    \operatorname{Ric}(X,X)=\rho-\nabla_X\dot f,
\]
we evaluate the same integral directly. Since
\(\phi(s_0)=0\), \(\phi(s_0-1)=1\), and
\(\dot\phi=-1\) on \([s_0-1,s_0]\), integration by parts gives
\begin{align*}
    \int_{s_0-1}^{s_0}\phi^2\operatorname{Ric}(X,X)\,dt
    &=
    \int_{s_0-1}^{s_0}\phi^2\rho(\gamma(t))\,dt
    -
    \int_{s_0-1}^{s_0}\phi^2\nabla_X\dot f(\gamma(t))\,dt \\
    &\ge
    \frac{K_2}{3}
    +
    \left[-\phi^2\dot f(\gamma(t))\right]_{s_0-1}^{s_0}
    +
    \int_{s_0-1}^{s_0}2\phi\dot\phi\,\dot f(\gamma(t))\,dt \\
    &=
    \frac{K_2}{3}
    +
    \dot f(\gamma(s_0-1))
    -
    2\int_{s_0-1}^{s_0}\phi\,\dot f(\gamma(t))\,dt.
\end{align*}
Combining this lower bound with \eqref{eq:ric_upper}, the term
\(\dot f(\gamma(s_0-1))\) cancels. Hence
\[
    2\int_{s_0-1}^{s_0}\phi\,\dot f(\gamma(t))\,dt
    \ge
    K_2s_0-c_0,
\]
where
\[
    c_0
    :=
    \frac{5}{3}K_2
    +2(n-1)
    +\max_{B_o(1)}|\operatorname{Ric}|
    +\max_{B_o(1)}|\nabla f|.
\]
Indeed, we used
\[
    \dot f(\gamma(1))\ge -\max_{B_o(1)}|\nabla f|.
\]

Since
\[
    \int_{s_0-1}^{s_0}\phi(t)\,dt=\frac12,
\]
there exists \(t^*\in[s_0-1,s_0]\) such that
\[
    \dot f(\gamma(t^*))\ge K_2s_0-c_0.
\]
Taking \(s_0\) sufficiently large so that \(K_2s_0-c_0>0\), and using the
gradient estimate
\[
    |\nabla f|^2\le 2K_1(f+C_1),
\]
we obtain
\[
    \sqrt{2K_1\bigl(f(\gamma(t^*))+C_1\bigr)}
    \ge
    K_2s_0-c_0.
\]

Now set
\[
    y(t):=\sqrt{f(\gamma(t))+C_1}.
\]
By the same gradient estimate,
\[
    |\dot y(t)|
    =
    \frac{|\dot f(\gamma(t))|}
         {2\sqrt{f(\gamma(t))+C_1}}
    \le
    \sqrt{\frac{K_1}{2}}.
\]
Thus \(y\) is uniformly Lipschitz along \(\gamma\). Since
\(s_0-t^*\le1\), we have
\[
    y(s_0)
    \ge
    y(t^*)-\int_{t^*}^{s_0}|\dot y(t)|\,dt
    \ge
    y(t^*)-\sqrt{\frac{K_1}{2}}.
\]
Therefore, for \(s_0\) sufficiently large,
\[
    \sqrt{f(\gamma(s_0))+C_1}
    \ge
    \frac{K_2s_0-c_0}{\sqrt{2K_1}}
    -
    \sqrt{\frac{K_1}{2}}
    =
    \frac{K_2s_0-(c_0+K_1)}{\sqrt{2K_1}}.
\]
Thus
\[
    f(\gamma(s_0))+C_1
    \ge
    \frac{(K_2s_0-(c_0+K_1))^2}{2K_1}.
\]
After increasing the constant and taking \(s_0\ge r_0\) sufficiently large, we
can choose \(C_3>0\) such that
\[
    f(\gamma(s_0))
    \ge
    \frac{(K_2s_0-C_3)^2}{2K_1}.
\]
Since \(s_0=r(x)\), this gives
\[
    f(x)
    \ge
    \frac{(K_2r(x)-C_3)^2}{2K_1}
\]
for all \(r(x)\ge r_0\). Together with the upper bound in Theorem
\ref{thm4.5}, this proves the theorem.
\end{proof}
\begin{remark}
The scalar curvature lower bound assumed above is consistent with the existing
theory of Ricci almost solitons. For example, Pigola, Rigoli, Rimoldi, and
Setti \cite{PRR} obtained scalar curvature nonnegativity results under natural
additional assumptions, such as the superharmonicity of the soliton function
\(\rho\). Thus, in the cases covered by those results, the hypothesis
\(R\ge R_{\min}\) in Theorems~\ref{thm4.5} and~\ref{thm4.6} is naturally
satisfied, for instance with \(R_{\min}=0\).
\end{remark}

We now prove Theorem \ref{thm1.3}. 

\begin{proof}[Proof of Theorem \ref{thm1.3}]
All pointwise computations are first made on the regular set
\(0<r<c(\theta)\), where \(c(\theta)\) denotes the cut time of \(o\) in the
direction \(\theta\).

Using \(p=r^2\) as the test function in the proof of Theorem \ref{thm1.1}, we
obtain
\begin{equation}\label{eq:r2-completion}
\begin{aligned}
    r^2\omega(r,\theta)
    \le
    (n-1)r
    -
    \int_0^r
    \frac{\bigl(t\omega(t,\theta)-(n-1)\bigr)^2}{n-1}\,dt
    -
    \int_0^r
    t^2\operatorname{Ric}(\partial_t,\partial_t)\,dt .
\end{aligned}
\end{equation}
Let
\[
    L(r,\theta)=\log\left(\frac{J(r,\theta)}{r^{n-1}}\right).
\]
Then \eqref{eq:r2-completion} implies
\begin{equation}\label{eq:L-prime}
    \dot{L}(r,\theta)
    \le
    -
    \frac1{r^2}
    \int_0^r
    t^2\operatorname{Ric}(\partial_t,\partial_t)\,dt .
\end{equation}
Integrating from \(0\) to \(r\) and using
\[
    \lim_{r\to0^+}L(r,\theta)=0,
\]
we get
\[
    L(r,\theta)
    \le
    -
    \int_0^r
    \frac1{s^2}
    \left(
        \int_0^s
        t^2\operatorname{Ric}(\partial_t,\partial_t)\,dt
    \right)ds.
\]
By integration by parts,
\[
\begin{aligned}
    \int_0^r
    \frac1{s^2}
    \left(
        \int_0^s
        t^2\operatorname{Ric}(\partial_t,\partial_t)\,dt
    \right)ds
    &=
    -\frac1r
    \int_0^r
    t^2\operatorname{Ric}(\partial_t,\partial_t)\,dt        \\
    &\quad
    +
    \int_0^r
    t\operatorname{Ric}(\partial_t,\partial_t)\,dt .
\end{aligned}
\]
Here the boundary term at \(s=0\) vanishes since
\[
    \lim_{s\to0^+}
    \frac{\int_0^s t^2\operatorname{Ric}(\partial_t,\partial_t)\,dt}{s}
    =
    0.
\]
Thus
\[
    L(r,\theta)
    \le
    \frac1r
    \int_0^r
    t^2\operatorname{Ric}(\partial_t,\partial_t)\,dt
    -
    \int_0^r
    t\operatorname{Ric}(\partial_t,\partial_t)\,dt .
\]
Since
\[
    (rL)'(r,\theta)=L(r,\theta)+r\dot L(r,\theta),
\]
combining the last inequality with \eqref{eq:L-prime} gives
\begin{equation}\label{eq:rL}
    \partial_r(rL)(r,\theta)
    \le
    -
    \int_0^r
    t\operatorname{Ric}(\partial_t,\partial_t)\,dt .
\end{equation}

Along the radial geodesic \(\gamma_\theta\), write
\[
    f(t)=f(\gamma_\theta(t)).
\]
Since
\[
    \operatorname{Ric}+\nabla^2f=\rho g
    \qquad\text{and}\qquad
    \rho\ge\frac12,
\]
we have
\[
    \operatorname{Ric}(\partial_t,\partial_t)
    =
    \rho-f''(t)
    \ge
    \frac12-f''(t).
\]
Therefore
\begin{equation}\label{eq:ricci-to-f}
\begin{aligned}
    -
    \int_0^r
    t\operatorname{Ric}(\partial_t,\partial_t)\,dt
    &\le
    -
    \int_0^r
    t\left(\frac12-f''(t)\right)\,dt                         \\
    &=
    -\frac{r^2}{4}+rf'(r)-f(r)+f(o).
\end{aligned}
\end{equation}
The last expression has the decomposition
\begin{equation}\label{eq:square-split}
\begin{aligned}
    -\frac{r^2}{4}+rf'(r)-f(r)+f(o)
    &=
    f(o)-\bigl(f-|\nabla f|^2\bigr)(r,\theta)                  \\
    &\quad
    -
    \left(\frac r2-f'(r)\right)^2
    -
    \left(|\nabla f|^2(r,\theta)-(f'(r))^2\right).
\end{aligned}
\end{equation}
Since
\[
    |\nabla f|^2-(f')^2\ge0,
\]
it follows from \eqref{eq:rL}--\eqref{eq:square-split} that
\[
    (rL)'(r,\theta)
    \le
    f(o)-\bigl(f-|\nabla f|^2\bigr)(r,\theta).
\]
Integrating from \(0\) to \(r\), we obtain
\[
    L(r,\theta)
    \le
    f(o)
    -
    \frac1r
    \int_0^r
    \bigl(f-|\nabla f|^2\bigr)(\gamma_\theta(t))\,dt .
\]
Hence
\begin{equation}\label{eq:J-sharp}
    J(r,\theta)
    \le
    e^{\Phi(r,\theta)}r^{n-1}
    \qquad
    \text{for }0<r<c(\theta).
\end{equation}

Using the polar integration formula over the segment domain, as in the proof of
Theorem \ref{hhg}, \eqref{eq:J-sharp} gives
\[
\begin{aligned}
    \operatorname{Vol}(B_o(R))
    &=
    \int_{\mathbb S^{n-1}}
    \int_0^{\min\{R,c(\theta)\}}
    J(s,\theta)\,ds\,d\theta                                      \\
    &\le
    \int_{\mathbb S^{n-1}}
    \int_0^{\min\{R,c(\theta)\}}
    e^{\Phi(s,\theta)}s^{n-1}\,ds\,d\theta                         \\
    &\le
    \int_{\mathbb S^{n-1}}
    \int_0^R
    e^{\Phi(s,\theta)}s^{n-1}\,ds\,d\theta .
\end{aligned}
\]
This proves \eqref{eq:sharp-comparison-almost}.

Since
\[
    A_f(R)=\sup_{B_o(R)}(|\nabla f|^2-f),
\]
we have, for \(0\le s\le R\),
\[
    f-|\nabla f|^2\ge -A_f(R).
\]
Thus
\[
    \Phi(s,\theta)\le f(o)+A_f(R).
\]
Substituting this into \eqref{eq:sharp-comparison-almost}, we obtain
\[
\begin{aligned}
    \operatorname{Vol}(B_o(R))
    &\le
    e^{f(o)+A_f(R)}
    \int_{\mathbb S^{n-1}}\int_0^R s^{n-1}\,ds\,d\theta             \\
    &=
    e^{f(o)+A_f(R)}\omega_nR^n .
\end{aligned}
\]
This proves \eqref{eq:polynomial-growth-almost}.

If, for a fixed \(R>0\),
\[
    f(o)\le -A_f(R),
\]
then
\[
    \Phi(s,\theta)\le0
    \qquad
    \text{for }0<s\le R.
\]
Therefore \(e^{\Phi(s,\theta)}\le1\), and
\[
    \operatorname{Vol}(B_o(R))\le \omega_nR^n.
\]
This proves \eqref{eq:euclidean-volume-bound}.

We now prove the rigidity statement. Suppose equality in
\eqref{eq:sharp-comparison-almost} holds for every \(R>0\). Set
\[
    \Psi(r,\theta):=e^{\Phi(r,\theta)}r^{n-1}.
\]
Since \(\Psi(r,\theta)>0\) for all \(r>0\), the same segment-domain argument as
in the proof of Theorem \ref{hhg} gives
\[
    c(\theta)=\infty
    \qquad
    \text{for every }\theta\in\mathbb S^{n-1}.
\]
Hence
\[
    \operatorname{Cut}(o)=\varnothing.
\]
Moreover, equality of the volume integrals and the pointwise estimate
\eqref{eq:J-sharp} imply
\begin{equation}\label{eq:J-Psi}
    J(r,\theta)=e^{\Phi(r,\theta)}r^{n-1}
    \qquad
    \text{for all }r>0,\ \theta\in\mathbb S^{n-1}.
\end{equation}
Thus
\begin{equation}\label{eq:L-equals-Phi}
    L(r,\theta)=\Phi(r,\theta).
\end{equation}

Define
\[
    I(r,\theta)
    :=
    \int_0^r
    \frac1{s^2}
    \int_0^s
    t^2\bigl(f''(t,\theta)-\rho(t,\theta)\bigr)\,dt\,ds .
\]
By the \(p=r^2\) case of Theorem \ref{thm1.1}, we have
\begin{equation}\label{eq:L-leq-I}
    L(r,\theta)\le I(r,\theta).
\end{equation}

We now compare \(I\) with \(\Phi\). A direct calculation gives
\[
    \bigl(rI(r,\theta)\bigr)'
    =
    \int_0^r
    t\bigl(f''(t,\theta)-\rho(t,\theta)\bigr)\,dt
    =
    rf'(r)-f(r)+f(o)-\int_0^r t\rho(t,\theta)\,dt.
\]
On the other hand,
\[
    \bigl(r\Phi(r,\theta)\bigr)'
    =
    f(o)-\bigl(f-|\nabla f|^2\bigr)(r,\theta).
\]
Hence
\[
\begin{aligned}
    \bigl(r(\Phi-I)\bigr)'
    &=
    |\nabla f|^2-rf'
    +
    \int_0^r t\rho(t,\theta)\,dt                                    \\
    &=
    \bigl(|\nabla f|^2-(f')^2\bigr)
    +
    \left(f'-\frac r2\right)^2
    +
    \int_0^r
    t\left(\rho(t,\theta)-\frac12\right)dt .
\end{aligned}
\]
Since
\[
    |\nabla f|^2-(f')^2\ge0
    \qquad\text{and}\qquad
    \rho\ge\frac12,
\]
we have
\[
    \bigl(r(\Phi-I)\bigr)'\ge0.
\]
Moreover,
\[
    \lim_{r\to0^+}r(\Phi-I)=0.
\]
Therefore
\begin{equation}\label{eq:I-leq-Phi}
    I(r,\theta)\le \Phi(r,\theta).
\end{equation}

Combining \eqref{eq:L-equals-Phi}, \eqref{eq:L-leq-I}, and
\eqref{eq:I-leq-Phi}, we obtain
\[
    L(r,\theta)=I(r,\theta)=\Phi(r,\theta)
\]
for all \(r>0\) and all \(\theta\in\mathbb S^{n-1}\).

Since equality holds in \eqref{eq:I-leq-Phi}, the nonnegative terms in
\[
\bigl(r(\Phi-I)\bigr)'
=
\bigl(|\nabla f|^2-(f')^2\bigr)
+
\left(f'-\frac r2\right)^2
+
\int_0^r
t\left(\rho(t,\theta)-\frac12\right)dt
\]
must vanish identically. Hence
\[
    f'(r)=\frac r2,
    \qquad
    |\nabla f|^2=(f')^2,
\]
and
\[
    \int_0^r
    t\left(\rho(t,\theta)-\frac12\right)dt=0
    \qquad
    \text{for all }r>0.
\]
Since \(\rho\ge\frac12\), the last identity implies
\[
    \rho\equiv\frac12.
\]
Moreover,
\[
    \nabla f=\frac r2\partial_r,
\]
and integrating along each radial geodesic gives
\[
    f(r,\theta)=f(o)+\frac{r^2}{4}.
\]

We now verify the hypotheses of Theorem \ref{hhg} for the model function
\[
    p(r,\theta)=r^2.
\]
For this choice,
\[
    \mathcal K_p=0.
\]
Since
\[
    f''=\frac12
    \qquad\text{and}\qquad
    \rho\equiv\frac12,
\]
we have
\[
    f''-\rho=0=\mathcal K_p.
\]
Thus \(p=r^2\) satisfies the generalized Riccati equation \eqref{ricatii}.

Furthermore, from
\[
    f(r,\theta)=f(o)+\frac{r^2}{4}
    \qquad\text{and}\qquad
    \nabla f=\frac r2\partial_r,
\]
we get
\[
    f-|\nabla f|^2=f(o).
\]
Hence
\[
    \Phi(r,\theta)
    =
    f(o)-\frac1r\int_0^r f(o)\,dt
    =
    0.
\]
Therefore \eqref{eq:J-Psi} gives
\[
    J(r,\theta)=r^{n-1}.
\]
Consequently,
\[
    \operatorname{Vol}(B_o(R))
    =
    \int_{\mathbb S^{n-1}}\int_0^R s^{n-1}\,ds\,d\theta
    =
    \omega_nR^n
    =
    \int_{\mathbb S^{n-1}}\int_0^R
    p^{\frac{n-1}{2}}(s,\theta)\,ds\,d\theta
\]
for every \(R>0\). Since the polar model
\[
    dr^2+r^2g_{\mathbb S^{n-1}}
\]
is globally defined and smooth at the pole, all hypotheses of Theorem
\ref{hhg} are satisfied. Hence \((M^n,g)\) is globally isometric to
\(\mathbb R^n\).

Together with
\[
    f=f(o)+\frac{r^2}{4},
    \qquad
    \rho\equiv\frac12,
\]
this gives the Gaussian shrinking Ricci soliton, up to adding the constant
\(f(o)\) to the potential.
\end{proof}

Using the potential estimates above, we obtain the following corollary.
\begin{corollary}\label{cor4.7}
Assume the hypotheses of Theorem \ref{thm4.6}. After a constant rescaling of
the metric if necessary, suppose that
\[
    \rho\ge \frac12
\]
and that there exists a constant \(C_\rho>0\) such that
\[
    \left|\rho(x)-\frac12\right|
    \le
    \frac{C_\rho}{1+r(x)^2}
\]
for all \(x\in M\). Then there exists a constant \(C>0\), independent of \(R\),
such that
\[
    \operatorname{Vol}(B_o(R))\le CR^n
\]
for all \(R>0\).
\end{corollary}

\begin{proof}
By Theorem \ref{thm1.3}, it is enough to prove that the quantity
\[
    A_f(R)
    =
    \sup_{B_o(R)}\bigl(|\nabla f|^2-f\bigr)
\]
is bounded independently of \(R\).

From Corollary \ref{cor4.3} and the proof of Theorem \ref{thm4.5}, we have
\[
    |\nabla f|^2
    \le
    2\rho f+C_0
\]
for some constant \(C_0\). Hence
\[
    |\nabla f|^2-f
    \le
    (2\rho-1)f+C_0.
\]
By Theorem \ref{thm4.6}, after enlarging the constant if necessary, there
exists \(C_1>0\) such that
\[
    |f(x)|\le C_1(1+r(x)^2)
\]
for all \(x\in M\). On the other hand, the decay assumption gives
\[
    |2\rho(x)-1|
    \le
    \frac{2C_\rho}{1+r(x)^2}.
\]
Therefore
\[
    (2\rho-1)f
    \le
    |2\rho-1|\,|f|
    \le
    2C_\rho C_1.
\]
It follows that
\[
    |\nabla f|^2-f
    \le
    2C_\rho C_1+C_0
\]
on \(M\). Thus there exists a constant \(C_A>0\), independent of \(R\), such
that
\[
    A_f(R)\le C_A
    \qquad
    \text{for all }R>0.
\]
Applying Theorem \ref{thm1.3}, we obtain
\[
    \operatorname{Vol}(B_o(R))
    \le
    e^{f(o)+C_A}\omega_n R^n
    \qquad
    \text{for all }R>0.
\]
This proves the claim.
\end{proof}

\section*{Acknowledgements}

The author would like to thank his advisor Prof. Meng Zhu for inspiring
discussions and invaluable suggestions. The author is also grateful to the
reviewer for the careful reading of the manuscript and for the valuable comments
and suggestions, which helped improve the paper.

 \vspace{3em} % 增加 1em 的垂直间距 

\section*{Declaration of generative AI and AI-assisted technologies in the manuscript preparation process}

During the preparation of this work, the author used OpenAI's ChatGPT to assist
with language editing, improving readability, and checking the clarity and
internal consistency of parts of the mathematical exposition. After using this
tool, the author reviewed and edited the content as needed, independently
verified all mathematical statements and proofs, and takes full responsibility
for the content of the published article.

\end{sloppypar}

\begin{thebibliography}{99}
\bibitem{AE} A. Barros, E. Ribeiro Jr, Some characterizations for compact almost Ricci solitons. \emph{Proc. Amer. Math. Soc.}, 140, 1033-1040. (2012).

\bibitem{ABR} A. Barros, R. Batista, E. Ribeiro Jr, Compact almost Ricci solitons with constant scalar curvature are gradient. \emph{Monatsh. Math.}, 174, 29–39. (2014)


\bibitem{AzamiHajiaghasi2022}
S. Azami and S. Hajiaghasi, New volume comparison with almost Ricci soliton,
\textit{Commun. Korean Math. Soc.}, \textbf{37} (2022), no.~3, 839--849.



\bibitem{BakryEmery1985} D. Bakry, M. \'Emery, Diffusions hypercontractives, \textit{Séminaire de probabilités, XIX}, 1983/84, pp. 177–206, Lecture Notes in Mathematics, vol. 1123, Springer, Berlin, 1985.

\bibitem{BakryQian} D. Bakry, Z.-M. Qian, Some new results on eigenvectors via dimension, diameter and Ricci curvature, \textit{Advances in Mathematics}, vol. 155, pp. 98–153, 2000.

\bibitem{CCD} G. Catino, L. Cremaschi, Z. Djadli, C. Mantegazza, L. Mazzieri, The Ricci-Bourguignon flow. \emph{Pacific J. Math.}, 287(2), 337–370. (2017). 

\bibitem{CM} G. Catino, L. Mazzieri, Gradient Einstein solitons. \emph{Nonlinear Anal.}, 132, 66--94. (2016).

\bibitem{CZ} H.-D. Cao, D. Zhou, On complete gradient shrinking Ricci solitons. \emph{J. Differential Geom.}, 85(2), 175-186. (2010).

\bibitem{CCZ} H.-D. Cao, B.-L. Chen, X.-P. Zhu, Recent Developments on Hamilton’s Ricci flow. \emph{Surveys in Differential Geometry XII}. (2008).

\bibitem{CRZ} X. Cheng, E. Ribeiro Jr, D. Zhou, Volume growth estimates for Ricci solitons and quasi-Einstein manifolds. \emph{J. Geom. Anal.}, 32(2), 62. (2022).

\bibitem{FAJR} F. E. S. Feitosa, A. A. Freitas Filho, J. N. V. Gomes, R. S. Pina, Gradient Ricci almost soliton twisted product. \emph{J. Geom. Phys.}, 143, 22–32. (2019).

\bibitem{GrayVanhecke1979}
A. Gray and L. Vanhecke, Riemannian geometry as determined by the volumes of small geodesic balls,
\textit{Acta Math.}, \textbf{142} (1979), 157--198.

\bibitem{Ha1}R. S. Hamilton, The formation of singularities in the Ricci flow. \emph{Surveys in Differential Geometry (Cambridge, MA, 1993)}, 2, 7-136. (1995).

\bibitem{Ha3}R. S. Hamilton, The Ricci flow on surfaces. \emph{Contemp. Math.}, 71, 237-261. (1988).

\bibitem{Li}P. Li, Geometric analysis. Cambridge Studies in Advanced Mathematics, 134. Cambridge University Press. (2012).

\bibitem{MW} O. Munteanu and J. Wang, Geometry of manifolds with densities, \emph{Advances in Math}. 259 (2014), 269–305.

\bibitem{N}A. Naber, Noncompact shrinking four solitons with nonnegative curvature. \emph{J. Reine Angew. Math.}, 645, 125-153. (2010).

\bibitem{NW}L. Ni, N. Wallach, On a classification of gradient shrinking solitons. \emph{Math. Res. Lett.}, 15(5), 941-955. (2008).

\bibitem{Pel1} G. Perelman, The entropy formula for the Ricci flow and its geometric applications. arXiv:math.DG/0211159. (2002).

\bibitem{Pel2} G. Perelman, Ricci flow with surgery on three manifolds. arXiv:math.DG/0303109. (2003).

\bibitem{Pe} P. Petersen, Riemannian geometry. New York: Springer. (2006).

\bibitem{Pw1} P. Petersen, W. Wylie, On the classification of gradient Ricci solitons. \emph{Geom. Topol.}, 14, 2277–2300. (2010).

\bibitem{Pw2} P. Petersen, W. Wylie, Rigidity of gradient Ricci solitons. \emph{Pacific J. Math.}, 241, 329–345. (2009).

\bibitem{PRR} S. Pigola, M. Rigoli, M. Rimoldi, A. G. Setti, Ricci almost solitons. \emph{Ann. Sci. Norm. Super. Pisa-Cl. Sci.}, 10(4), 757-799. (2011).

\bibitem{WW} G. Wei, W. Wylie, Comparison geometry for the Bakry-Emery Ricci tensor. \emph{J. Differential Geom.}, 83, 377–405. (2009).

\bibitem{Z} Z. H. Zhang, On the completeness of gradient Ricci solitons. \emph{Proc. Amer. Math. Soc.}, 137, 2755–2759. (2009).

\bibitem{ZZ} Q. S. Zhang, M. Zhu, New volume comparison results and applications to degeneration of Riemannian metrics. \emph{Adv. Math.}, 352, 1096-1154. (2019).
\end{thebibliography}
\end{document}